\title{A Deligne Conjecture for Prestacks}
\author{Ricardo Campos} 
\address[Ricardo Campos]{Institut de Mathématiques de Toulouse, UMR5219, Université de Toulouse, CNRS, UPS, F-31062 Toulouse Cedex 9, France}
\email{ricardo.campos@math.univ-toulouse.fr}  
\author{Lander Hermans}
\address[Lander Hermans]{Universiteit Antwerpen, Departement Wiskunde, Middelheimcampus,
	Middelheimlaan 1,
	2020 Antwerp, Belgium}
	\email{lander.hermans@uantwerpen.be}
\thanks{The first author is supported by the  ANR-20-CE40-0016 HighAGT. The second author is a predoctoral fellow of the Research Foundation - Flanders (FWO),
file number 1194422N. \\
Keywords: operads, prestacks, Deligne conjecture, operadic twisting, Gerstenhaber--Schack complex}
\begin{document}

\subjclass[2020]{Primary 18F20, 18M70, 13D03, 16E40, 55P10}

\date{\today}

\dedicatory{}

\commby{Julia Bergner}

\begin{abstract}
We prove an analog of the Deligne conjecture for prestacks. We show that given a prestack $\mathbb A$, its Gerstenhaber--Schack complex $\mathbf{C}_{\mathsf{GS}}(\mathbb A)$ is naturally an $\E2$-algebra. This structure generalises both the known $\mathsf{L}_\infty$-algebra structure on $\mathbf{C}_{\mathsf{GS}}(\mathbb A)$, as well as the Gerstenhaber algebra structure on its cohomology $\mathbf{H}_{\mathsf{GS}}(\mathbb A)$. 
The main ingredient is the proof of a conjecture of Hawkins \cite{hawkins}, stating that the dg operad $\Quilt$ has vanishing homology in positive degrees. As a corollary, $\Quilt$ is quasi-isomorphic to the operad $\Brace$ encoding brace algebras. In addition, we improve the $\Linf$-structure on $\Quilt$ by showing that it originates from a $\Prelieinf$-structure lifting the $\Prelie$-structure on $\Brace$ in homology.
\end{abstract}

\maketitle
\setcounter{tocdepth}{1}
\tableofcontents

\section{Introduction}\label{parintro}

In his famous 1993 letter, Deligne conjectured that the Gerstenhaber-structure of Hochschild cohomology  for associative algebras lifts to an $\E2$-structure on the Hochschild complex, that is, the complex is an algebra over a dg operad homotopy equivalent to the chain little disks operad $\Disk$ \cite{deligneletter, gerstenhaber1963, cohenladamay1976}. The many solutions proposed \cite{mccluresmith2002, kontsevichsoibelmandeligne, tamarkin1998, bergerfresse2004, bataninberger2009, voronov, kaufmann2007} factor through Gerstenhaber and Voronov's explicit Homotopy G-structure on the complex \cite{gerstenhabervoronov}, that is, they construct a dg operad $\mathcal{G}$ homotopy equivalent to $\Disk$ and a quasi-isomorphism $\mathcal{G} \stackrel{\sim}\longrightarrow \HG$ where $\HG$ is the dg operad encoding Homotopy G-algebras. \\

In this paper we are interested in an analog of the Deligne conjecture for prestacks. In this setting, {similar to the Hochschild complex for associative algebras,} the Gerstenhaber--Schack complex $\mathbf{C}_{\mathsf{GS}}(\mathbb A)$ for a prestack $\mathbb A$ controls the deformations of $\mathbb A$ {and its homology carries a Gerstenhaber algebra structure \cite{gerstenhaberschack1, lowenvandenberghCCT, DVL}}. Our main result is an explicit solution of the Deligne conjecture for prestacks lifting the Gerstenhaber algebra structure to the level of the complex.  
\begin{constr}[Theorem \ref{thmmodel}]
{ There is a dg operad $\TwQuilt$ that is quasi-isomorphic to both $\HG$ and $\Disk$ and which admits an explicit combinatorial description.}
\end{constr} 
\begin{theorem}[Theorem \ref{thmaction}]
{Given a prestack $\mathbb A$, there is an action of $\TwQuilt$ on its Gerstenhaber--Schack complex $\mathbf{C}_{\mathsf{GS}}(\mathbb A)$ inducing a Gerstenhaber algebra structure on Gerstenhaber--Schack cohomology $\mathbf{H}_{\mathsf{GS}}(\mathbb A)$.}
\end{theorem}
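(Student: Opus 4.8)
The plan is to obtain the action as an instance of operadic twisting, using two facts: a prestack structure is a Maurer--Cartan element, and $\Quilt$ carries the $\Prelieinf$-structure constructed above, which is exactly what is needed to form its operadic twist $\TwQuilt$ and to transport Maurer--Cartan elements to it. First I would recall the tautological action of the (untwisted) operad $\Quilt$ on Gerstenhaber--Schack-type complexes: for any assignment $U\mapsto\mathbb A(U)$ of a $k$-module to each object of the indexing category $\mathcal U$, the underlying bigraded space
\[
  \mathbf{C}_{\mathsf{GS}}(\mathbb A)\;=\;\prod_{p\ge 0}\ \prod_{\sigma\in N_p\mathcal U}\ \prod_{n\ge 0}\ \operatorname{Hom}_k\!\bigl(\mathbb A(\sigma_0)^{\otimes n},\,\mathbb A(\sigma_p)\bigr)
\]
carries a natural action of $\Quilt$, where a quilt with $k$ inputs prescribes how to glue $k$ strips of Hochschild cochains along a fixed pattern of composable morphisms, as in Hawkins' construction \cite{hawkins}. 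Together with the internal differential of $\Quilt$, this endows (a shift of) $\mathbf{C}_{\mathsf{GS}}(\mathbb A)$ with a $\Prelieinf$-algebra structure, functorial in $\mathbb A$; restricting along $\Linf\to\Prelieinf$ recovers the $\Linf$-structure of \cite{lowenvandenberghCCT, DVL}.

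Next I would invoke the description of prestack structures as Maurer--Cartan elements \cite{gerstenhaberschack1, lowenvandenberghCCT, DVL}: the multiplications $m_U$, the twisted comparison maps and the coherence twists of $\mathbb A$ assemble into a single degree-$2$ element $\pi\in\mathbf{C}_{\mathsf{GS}}(\mathbb A)$, the prestack axioms are equivalent to the Maurer--Cartan equation for $\pi$ in the $\Linf$-algebra above, and the genuine Gerstenhaber--Schack differential of $\mathbb A$ equals the $\pi$-twisted differential $d_\Quilt+[\pi,-]+\tfrac{1}{2}[\pi,\pi,-]+\cdots$. Granting this, operadic twisting applies directly: since $\TwQuilt=\operatorname{Tw}\Quilt$ and $\pi$ is a Maurer--Cartan element in a $\Quilt$-algebra, the twisted complex $\bigl(\mathbf{C}_{\mathsf{GS}}(\mathbb A),\,d_\Quilt+[\pi,-]+\cdots\bigr)$ — which is precisely the Gerstenhaber--Schack complex of $\mathbb A$ — inherits a natural, functorial action of $\TwQuilt$.

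By Theorem~\ref{thmmodel} we have $\TwQuilt\simeq\Disk$, so $H_\bullet(\TwQuilt)$ is the operad governing Gerstenhaber algebras, and the action therefore descends to a Gerstenhaber algebra structure on $\mathbf{H}_{\mathsf{GS}}(\mathbb A)$. To identify this structure with the classical one of \cite{gerstenhaberschack1, lowenvandenberghCCT, DVL}, I would choose explicit quilts representing the generators of $H_0(\TwQuilt)$ and $H_1(\TwQuilt)$ and check on low-arity cochains that the resulting cup product and bracket coincide with the Gerstenhaber--Schack operations — a finite computation.

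The main obstacle is the bookkeeping in the second step: one must match all conventions — degrees, signs, and which portion of the Gerstenhaber--Schack differential is internal to $\Quilt$ versus supplied by $\pi$ — so that the $\Linf$-structure produced by the $\Quilt$-action is genuinely the one for which a prestack is Maurer--Cartan. A secondary point is convergence of the twisting series, which fails for arbitrary infinite products but holds here because the filtration by nerve (simplicial) degree is complete and each bracket with $\pi$ strictly raises that degree. Everything else is an application of tools already in place: Hawkins' action of $\Quilt$, the formalism of operadic twisting, and Theorem~\ref{thmmodel}.
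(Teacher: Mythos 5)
Your high-level strategy --- realize the $\TwQuilt$-action as an instance of operadic twisting of a $\Quilt$-algebra by a Maurer--Cartan element --- is the right one and is indeed how the paper proceeds. But the specific twisting datum you propose is wrong, and the gap is not mere bookkeeping. There is no ``tautological'' action of $\Quilt$ on the bare bigraded space $\prod_{\si,A}\Hom(\bigotimes_i \A(U_p)(A_i,A_{i-1}),\A(U_0)(\si\hs A_q,\si\st A_0))$ built from the $k$-modules alone: the action $\psi\colon\Quilt\to\End(s^{-1}\CGS(\A),d_0)$ of \cite{vanhermanslowen2022} uses the multiplications $m$ and the restrictions $f$ as \emph{structural ingredients of the operations themselves} (they fill the shaded regions and the double lines of a quilt), not as components of a Maurer--Cartan element. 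Consequently the full prestack structure $\pi=(m,f,c)$ is not a Maurer--Cartan element of a $\Quilt$-induced $\Linf$-structure on the bare complex; only the twist $c\in\CC^{2,0}(\A)$ plays that role, in the $\Linf$-structure induced on $(s^{-1}\CGS(\A),d_0)$ via $\Linf\to\Quilt\xrightarrow{\psi}\End(s^{-1}\CGS(\A),d_0)$. Your route would require constructing a genuinely new $\Quilt$-action independent of $m$ and $f$ and proving that prestack structures are Maurer--Cartan for it; neither is available in the literature and you do not supply them. Relatedly, the $\Linf$-structure of \cite{DVL,vanhermanslowen2022} governing deformations is the \emph{$c$-twisted} one ($L_n^c$), not the one obtained by restricting the untwisted $\Quilt$-action along $\Linf\to\Prelieinf\to\Quilt$, so that identification in your first step is also off.

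The paper's actual argument is short once the setup is right: starting from $\psi\colon\Quilt\to\End(s^{-1}\CGS(\A),d_0)$, the twists $c$ satisfy the Maurer--Cartan equation and certain vanishing relations, so $\psi$ extends to $\psi_c\colon\Quilt_b[\![c]\!]\to\End(s^{-1}\CGS(\A))$ with the full differential $d$; one then checks that the canonical map $\TwQuilt\to\Quilt_b[\![c]\!]$ (identity on quilts, $\al\mapsto c$) is a map of dg operads, and composes. Your remark on convergence is well taken (the paper uses the completed coproduct $\hat\vee$), and your final step --- deducing the Gerstenhaber structure on $\HGS(\A)$ from $\TwQuilt$ being an $\E2$-operad --- is fine. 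But as written, the first and second steps of your proposal rest on an unproved (and, in the form stated, incorrect) claim, so the proof does not go through.
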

We point out that the abstract existence of solutions of the Deligne conjecture for prestacks can be deduced via homotopy transfer (see Remark 4.3). On the other hand, our solution (the first \emph{explicit} one) has both
practical and conceptual advantages. Conceptually, $\TwQuilt$ is a $2$-dimensional analog of $\HG$ which has proven to take up a central role in the classical Deligne conjecture (see above). 

\subsection{The GS complex}

The GS complex for prestacks takes up the pivotal role of the Hochschild complex for associative algebras: its cohomology is a derived invariant computing $\Ext$-cohomology \cite{DVL, lowenvandenberghCCT} and it is endowed with an $\Linf$-structure governing its deformations \cite{vanhermanslowen2022, dinhvanhermanslowen2023}. Prestacks generalize presheaves of associative algebras by relaxing their functoriality up to a natural isomorphism $c$ called \emph{twists}. They are motivated by (noncommutative) algebraic geometry, where they appear for example as structure sheaves of a scheme and noncommutative deformations thereof \cite{artintatevandenbergh, barannikov2007, dinhvanliulowen2017, lowenvandenberghhoch, vandenbergh2}. Indeed, Lowen and Van den Bergh observed in \cite{lowenvandenberghhoch} that $\Ext$-cohomology of presheaves parametrizes their first order deformations, not as presheaves, but as prestacks. In a more global picture, they have become part of homological mirror symmetry as proposed by Kontsevich \cite{kontsevich2, aurouxorlov2008}.

\subsection{Structure of the proof}
Let us start by recalling Gerstenhaber and Voronov's approach and present our key insight. For an associative algebra $A$, the Homotopy G-structure on its (desuspended) Hochschild complex $\Choch(A)$ is obtained by twisting the brace-structure with the multiplication. Using operadic twisting \cite{dolgushevwillwacher2015, dotsenkoshadrinvallette2024}, this result can be rephrased as a morphism of dg operads 
$$\TwBrace \longrightarrow \End(s^{-1}\Choch(A))$$
where $\TwBrace$ {(Definition \ref{deftwbrace})} is the operadic twisting of the operad $\Brace$ encoding brace-algebras. In fact, $\HG$ is isomorphic to  $\TwBraceover$ {(Remark \ref{rem:historical remark})}, a quasi-isomorphic suboperad of $\TwBrace$. The same approach does not work for prestacks: the operad $\Brace$ is too small to act on the GS complex $\CGS(\A)$ of a prestack $\A$. Indeed, a brace-algebra induces a $\Lie$-structure although a $\Linf$-structure is required to capture prestack structures as Maurer--Cartan elements. 

As a remedy, Dinh Van, Lowen and the second author construct in \cite{vanhermanslowen2022} an action of Hawkins' dg operad $\Quilt$ \cite{hawkins} on the (desuspended) GS complex $\CGS(\A)$. As $\Quilt$ projects onto $\Brace$ and induces a $\Linf$-structure, it is posited as a suitable replacement. In \S \ref{parquilt}, our main technical result shows the following, hereby proving Hawkins' conjecture \cite[Conj. 3.7]{hawkins}.
\begin{theorem}[Theorem \ref{thmhomologyQuilt}]
The projection $\Quilt \twoheadrightarrow \Brace$ is a quasi-isomorphism.
\end{theorem}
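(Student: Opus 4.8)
The plan is to prove the statement arity by arity, by equipping the chain complex $\Quilt(n)$ with a filtration on whose associated graded the differential splits off a manifestly acyclic part, leaving only $\Brace(n)$ concentrated in degree $0$; the projection $\Quilt \twoheadrightarrow \Brace$ is then identified with the resulting edge map.

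First I would fix the combinatorial basis of $\Quilt(n)$ by \emph{quilts} and decompose the differential as $d = d_{\mathrm{br}} + d_{\mathrm{q}}$, where $d_{\mathrm{br}}$ is the brace-tree (cobar-type) part through which the projection to $\Brace$ factors, and $d_{\mathrm{q}}$ is the part that only alters the extra ``quilting'' datum attached to the vertices of the underlying brace tree. I would then introduce a weight $w \ge 0$ measuring the discrepancy between a quilt and its underlying brace tree --- morally, the number of extra subdivisions carried by the quilting datum --- chosen so that $w = 0$ cuts out exactly the honest brace trees, $d_{\mathrm{q}}$ lowers $w$ by one, and $d_{\mathrm{br}}$ preserves $w$. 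The decreasing filtration by $w$ then produces a convergent (first-quadrant) spectral sequence whose $E^0$-page carries the differential $d_{\mathrm{q}}$ alone.

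The heart of the argument is the computation of $\bigl(\Quilt(n), d_{\mathrm{q}}\bigr)$. Grouping quilts by their underlying brace tree $T$, I expect this complex to split as a direct sum over such trees $T$, with the summand attached to $T$ a tensor product over the vertices $v$ of $T$ of the augmented simplicial chain complex of an explicit poset of ``refinements'' of the quilting datum at $v$. Proving that each of these posets has a contractible order complex --- best done by exhibiting an explicit extra degeneracy, e.g.\ adjoining the coarsest quilting at a vertex, which at the same time supplies a section $\Brace \hookrightarrow \Quilt$ --- shows that each summand is acyclic for $w > 0$ and one-dimensional for $w = 0$. Hence $E^1$ is concentrated on the line $w = 0$, where it is $\Brace(n)$ placed in degree $0$; being supported on a single line, the spectral sequence degenerates there, and $H_\bullet(\Quilt(n)) \cong \Brace(n)$, concentrated in degree $0$. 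A final bookkeeping check shows that this isomorphism is the one induced by the projection, and that the whole construction is natural in $n$ and compatible with the symmetric group actions, so that it assembles into an isomorphism of dg operads $H_\bullet(\Quilt) \cong \Brace$.

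The step I expect to be the main obstacle is precisely the explicit analysis of $d_{\mathrm{q}}$: identifying the poset of quilting refinements at a vertex, matching the signs appearing in $d_{\mathrm{q}}$ with the simplicial boundary \emph{uniformly} across all the ways a vertex can be embedded in a brace tree (its children, its parent, the surrounding braces), and establishing the contractibility of the order complex --- equivalently, producing the contracting homotopy. A secondary but routine subtlety is checking that $d_{\mathrm{q}}$ really does square to zero on each filtration quotient, i.e.\ that no mixed term from $d_{\mathrm{br}} d_{\mathrm{q}} + d_{\mathrm{q}} d_{\mathrm{br}}$ survives onto the $E^0$-page; this, however, is exactly what the weight grading $w$ is designed to guarantee.
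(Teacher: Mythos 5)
There is a genuine gap, and it sits exactly at the step you flag as the main obstacle. First, a structural point: the differential of $\Quilt$ never touches the tree $T$ at all --- it only deletes repeated letters from the word $W$ --- so your putative tree-changing piece $d_{\mathrm{br}}$ is zero, the splitting $\Quilt(n)\cong\bigoplus_{T\in\Tree(n)}\Quilt(T)$ holds on the nose (not just on an associated graded), and your weight $w$ is just $-\deg$, so the filtration by $w$ is the filtration by homological degree and the spectral sequence it produces carries no information. All the content is therefore in computing $H_\bullet(\Quilt(T),\partial)$ for a fixed tree $T$, which is where your argument breaks.

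The claim that $\Quilt(T)$ is a tensor product over the vertices of $T$ of local contractible complexes is false: the admissible repetition patterns of one letter in a quilting word depend on the repetition patterns of the other letters, so the ``refinement data'' at distinct vertices are not independent. Concretely, take $T\in\Tree(4)$ with root $1$, children $2\tre_T 3$, and $4$ a child of $3$ (so $4$ is the bottommost leaf). The words quilting $T_{\neg 4}$ are $123$, $132$ and $1323$; the set of words quilting $T$ that reduce to a given $W$ under deletion of all $4$'s is governed by the number $l$ of $<_W$-minimal blocks of $W$, and forms the cellular chain complex of $\Delta^{l-1}$. One finds $l=3$ over $123$ and $132$ but $l=2$ over $1323$, so the ``local complex at vertex $4$'' changes as the quilting of vertex $3$ changes; no tensor-product decomposition exists. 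The correct replacement is an induction on the number of vertices: peel off the single bottommost leaf $n$, observe that over each fixed reduced word $W\in\FWord(T_{\neg n})$ the fiber of $\red_n$ is a simplex chain complex (hence has one-dimensional homology), and assemble these via the spectral sequence of the resulting double complex to identify the next page with $\Quilt(T_{\neg n})$ up to shift. Your ``extra degeneracy by adjoining the coarsest quilting'' does supply the section $\Brace\hookrightarrow\Quilt$ and the contraction of each individual fiber, but it does not contract $\Quilt(T)$ in one step, precisely because the fibers vary.
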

{ In addition, we improve upon the result from \cite[Thm. 7.8]{hawkins} which constructs a morphism $\Linf \longrightarrow \Quilt$. Indeed, we factor this morphism through $\Prelieinf$, the minimal model of the Koszul operad $\PreLie$, lifting the morphism $\PreLie \longrightarrow \Brace$ in homology.}
\begin{prop}[Proposition \ref{propprelieinf}] 
{ We have a morphism $\Prelieinf\longrightarrow \Quilt$ inducing the morphism $\PreLie \longrightarrow \Brace$ in homology.}
\end{prop}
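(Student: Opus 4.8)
The plan is to realise the desired morphism as a lift of a standard one along the quasi-isomorphism of Theorem~\ref{thmhomologyQuilt}. Recall that $\Prelieinf$ is a quasi-free, hence cofibrant, dg operad --- it is the cobar construction of the (conilpotent) Koszul dual cooperad $\mathcal{C}$ of $\PreLie$. Besides the canonical quasi-isomorphism $\rho\colon\Prelieinf\xrightarrow{\ \sim\ }\PreLie$ of the minimal model, we have the operad map $\iota\colon\PreLie\hookrightarrow\Brace$ sending the generator to the pre-Lie product underlying a brace algebra (the binary brace $x\{y\}$); set $p:=\iota\circ\rho\colon\Prelieinf\to\Brace$, which by construction induces $\iota$ on homology. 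On the other hand, the projection $\pi\colon\Quilt\twoheadrightarrow\Brace$ is surjective and, by Theorem~\ref{thmhomologyQuilt}, a quasi-isomorphism, hence a trivial fibration of dg operads. Since $\Prelieinf$ is cofibrant, $p$ lifts through $\pi$ to a morphism $\varphi\colon\Prelieinf\to\Quilt$ with $\pi\circ\varphi=p$; equivalently, $\varphi$ is constructed by extending a partial lift one generator at a time, the obstruction to each step lying in the complex $\ker\pi$, which is acyclic by Theorem~\ref{thmhomologyQuilt}, and hence vanishing.

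It remains to read off the induced map on homology. We have $H_\bullet(\Prelieinf)=\PreLie$ and, by Theorem~\ref{thmhomologyQuilt}, $H_\bullet(\pi)$ is an isomorphism $H_\bullet(\Quilt)\xrightarrow{\ \sim\ }\Brace$, which we use as an identification; since $\Brace$ is concentrated in degree $0$ we have $H_\bullet(\Brace)=\Brace$ and $H_\bullet(p)=\iota$. Applying $H_\bullet$ to $\pi\circ\varphi=p$ then gives $H_\bullet(\varphi)=\iota\colon\PreLie\to\Brace$ under this identification, which is the claim. In particular, precomposing $\varphi$ with the canonical morphism $\Linf\to\Prelieinf$ induced by $\Lie\hookrightarrow\PreLie$ yields a morphism $\Linf\to\Quilt$ that factors through $\Prelieinf$ and induces $\Lie\to\Brace$ on homology, thereby refining \cite[Thm.~7.8]{hawkins}.

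The existence just sketched is soft: the only substantial input is Theorem~\ref{thmhomologyQuilt}, which is already available. The part I expect to require genuine effort --- and the main obstacle --- is making $\varphi$ \emph{explicit}, in keeping with the combinatorial aims of the paper. One would organise the lift through the convolution dg pre-Lie algebra $\mathfrak{g}=\mathrm{Hom}_{\mathbb{S}}(\mathcal{C},\Quilt)$, in which a morphism $\Prelieinf\to\Quilt$ is exactly a Maurer--Cartan element, and construct this element arity by arity by solving at each stage an equation $d(\mathrm{quilt})=(\text{a cocycle built from the lower-arity terms via the cobar differential and composition in }\Quilt)$ inside $\Quilt$; solvability at every stage follows again from Theorem~\ref{thmhomologyQuilt}, since the kernel of $\mathfrak{g}\to\mathrm{Hom}_{\mathbb{S}}(\mathcal{C},\Brace)$ is $\mathrm{Hom}_{\mathbb{S}}(\mathcal{C},\ker\pi)$, acyclic because $\ker\pi$ is and $\mathcal{C}$ is arity-wise finite-dimensional. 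What this does not give for free is closed formulas: the delicate point is to pin down consistent, $\mathbb{S}_n$-equivariant choices --- a preferred ``pre-Lie quilt'' lifting $x\{y\}$ in arity $2$, and explicit quilt primitives for each higher generator of $\Prelieinf$ --- so that $\varphi$ ends up with a clean recursive combinatorial description rather than a purely abstract one.
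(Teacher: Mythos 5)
Your argument is correct for the statement as literally quoted, but it takes a genuinely different route from the paper. The paper's proof is entirely explicit: it defines $PL_n$ as the antisymmetrization of Hawkins' operations $P_n$ over the permutations fixing $1$, and verifies the $\Prelieinf$-relations by revisiting the combinatorial bijection in the proof of \cite[Thm.~7.8]{hawkins} between the quilts of $\partial(P_n)$ and those of the compositions $P_k\circ_i P_l$; the one new observation is that the permutation matching a quilt with its counterpart always fixes the label of the root, so the bijection restricts to the sum over $\si$ with $\si(1)=1$. Your obstruction-theoretic lift along the trivial fibration $\pi\colon\Quilt\twoheadrightarrow\Brace$ is sound (surjectivity and acyclicity of $\ker\pi$ both follow from Theorem~\ref{thmhomologyQuilt}, and $\Prelieinf$ is quasi-free on an arity-graded collection, so the arity-by-arity extension converges), and it does produce \emph{a} morphism inducing $\PreLie\to\Brace$ in homology — you correctly identify that the price is the absence of closed formulas.

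Two caveats are worth recording. First, what the paper actually wants (see the introduction and the remark after Corollary~\ref{cormodel}) is that Hawkins' \emph{specific} morphism $\Linf\to\Quilt$, $l_n\mapsto L_n$, factors strictly through $\Prelieinf$; this is used downstream, since $\TwQuilt$ is defined by twisting along exactly these $L_n$. Your lift only guarantees that $\varphi\circ(\Linf\to\Prelieinf)$ and Hawkins' morphism agree up to homotopy over $\Brace$, not on the nose, so the claimed ``refinement'' of \cite[Thm.~7.8]{hawkins} is weaker than the paper's. Second, the soft argument leans on cofibrancy in a model structure on dg operads, which is a characteristic-zero tool, whereas the paper's combinatorial proof (like the rest of Section~\ref{parquilt}) works over any field. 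In short: your approach buys existence cheaply from Theorem~\ref{thmhomologyQuilt}; the paper's buys the explicit formula $pl_n\mapsto PL_n$ and the strict compatibility with the $\Linf$-structure, both of which are needed later.
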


Our {next} key insight is that we can now apply the machinery of operadic twisting in \S \ref{parmodel}. As the twisting functor $\Tw$ preserves quasi-isomorphisms \cite[Thm. 5.1]{dolgushevwillwacher2015}, we obtain our main result.
\begin{theorem}[Corollary \ref{cormodel}]
$\TwQuilt$ is an $\E2$-operad.
\end{theorem}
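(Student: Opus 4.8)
The plan is to deduce this directly from Theorem~\ref{thmhomologyQuilt} by transporting, through operadic twisting, the known $\E2$-structure on $\TwBrace$ to $\TwQuilt$. Recall that the twisting endofunctor $\Tw$ of \cite{dolgushevwillwacher2015, dotsenkoshadrinvallette2024} is defined on dg operads equipped with a morphism from $\Linf$, and that by \cite[Thm.~5.1]{dolgushevwillwacher2015} it preserves quasi-isomorphisms between such. Both operads at hand carry the required structure: $\Quilt$ via Hawkins' morphism $\Linf\longrightarrow\Quilt$, which by Proposition~\ref{propprelieinf} factors through $\Prelieinf$, and $\Brace$ via its canonical $\Prelie$-structure, i.e.\ through $\Linf\longrightarrow\Prelieinf\longrightarrow\PreLie\longrightarrow\Brace$.

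First I would check that the projection $\Quilt\twoheadrightarrow\Brace$ is a morphism of operads under $\Linf$. Since $\Brace$ is concentrated in degree $0$, any dg operad morphism out of $\Prelieinf$ into it factors through $H_0(\Prelieinf)=\PreLie$; applied to the composite $\Prelieinf\longrightarrow\Quilt\twoheadrightarrow\Brace$, Proposition~\ref{propprelieinf} identifies the resulting map $\PreLie\longrightarrow\Brace$ with the canonical one. Hence the square relating $\Prelieinf\to\Quilt$, the projection $\Prelieinf\to\PreLie$, the projection $\Quilt\to\Brace$ and $\PreLie\to\Brace$ commutes, and precomposing with $\Linf\to\Prelieinf$ gives the desired compatibility. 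Consequently $\Tw$ applies functorially, and Theorem~\ref{thmhomologyQuilt} together with \cite[Thm.~5.1]{dolgushevwillwacher2015} yields a quasi-isomorphism of dg operads
\[
\Tw\bigl(\Quilt\twoheadrightarrow\Brace\bigr)\colon\TwQuilt\xrightarrow{\sim}\TwBrace .
\]

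It then remains to invoke that $\TwBrace$ is an $\E2$-operad, which is a repackaging of the original Deligne conjecture via Gerstenhaber and Voronov's homotopy G-operad: $\HG$ embeds into $\TwBrace$ as a quasi-isomorphic suboperad, and $\HG$ is linked to the chain little disks operad $\Disk$ by a zig-zag of quasi-isomorphisms of dg operads \cite{gerstenhabervoronov, mccluresmith2002, tamarkin1998, kontsevichsoibelmandeligne}. Splicing this zig-zag onto the quasi-isomorphism above exhibits $\TwQuilt$ in the homotopy type of $\Disk$, which is the claim.

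Since the substance of the argument is already contained in Theorem~\ref{thmhomologyQuilt} and Proposition~\ref{propprelieinf}, I do not expect a genuine obstacle at this stage; the only points demanding care are (i) the compatibility of the $\Linf$-structures with the projection $\Quilt\twoheadrightarrow\Brace$, for which one must know that the $\Linf$-structure on $\Brace$ used to define $\TwBrace$ agrees, up to the homotopies allowed by \cite[Thm.~5.1]{dolgushevwillwacher2015}, with the one coming from $\Linf\to\Prelieinf\to\PreLie\to\Brace$, and (ii) verifying that $\Quilt$ and $\Brace$ satisfy the mild arity/connectivity hypotheses under which loc.\ cit.\ guarantees homotopy invariance of $\Tw$.
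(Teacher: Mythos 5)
Your proposal follows essentially the same route as the paper: check that the projection $p\colon\Quilt\twoheadrightarrow\Brace$ is compatible with the maps from $\Linf$, apply the homotopy invariance of operadic twisting \cite[Thm.~5.1]{dolgushevwillwacher2015} to Theorem~\ref{thmhomologyQuilt} to get the quasi-isomorphism $\TwQuilt\xrightarrow{\sim}\TwBrace$ (this is Theorem~\ref{thmmodel}), and then invoke the known $\E2$-nature of $\TwBrace$ via a quasi-isomorphic suboperad (the paper uses $\TwBraceover\cong\FSurj$ from \cite{bergerfresse2004, dolgushevwillwacher2015}, which is the same fact as your $\HG$ formulation). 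One minor remark: the compatibility in your point (i) is strict rather than up to homotopy, since $p$ kills every quilt of negative degree and sends $L_2$ to $C_2-C_2^{(12)}$.
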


Observe that generally the $\Prelieinf$-structure does not carry through to $\TwQuilt$-algebras. An appropriate analogy is the fact that the bracket in the Hochschild complex arises from a $\PreLie$-algebra, which, after twisting by the relevant Maurer--Cartan element, is no longer a dg $\PreLie$-algebra, only a dg $\Lie$-algebra. In the present setting, after twisting, the relevant algebraic structure is $\Linf$, instead of $\Prelieinf$.

Finally, in \S \ref{paraction}, we show that the action of $\Quilt$ on the GS complex from \cite{vanhermanslowen2022} extends to an action of $\TwQuilt$ by twisting with the prestack's twists $c$. Hence, we obtain the following explicit solution to the Deligne conjecture for prestacks.
\begin{theorem}[Theorem \ref{thmaction}]
There is an action of the $\E2$-operad $\TwQuilt$ on the GS complex $\CGS(\A)$ of a prestack $\A$.
\end{theorem}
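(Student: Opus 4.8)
The plan is to produce the $\TwQuilt$-action by applying the functoriality of operadic twisting to the $\Quilt$-action of \cite{vanhermanslowen2022}, using the twists $c$ of the prestack as the twisting datum.

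First I would recall from \cite{vanhermanslowen2022} the morphism of dg operads $\Quilt \longrightarrow \End(s^{-1}\CGS(\A))$, keeping track of which part of the Gerstenhaber--Schack differential is already built into the target (the part coming from the multiplications and restrictions of $\A$) and which part is to be produced by twisting (the part involving the twists $c$). By \cite[Thm.~7.8]{hawkins}, refined through $\Prelieinf$ in Proposition \ref{propprelieinf}, the operad $\Quilt$ is an operad under $\Linf$, so that $s^{-1}\CGS(\A)$ carries an induced $\Linf$-structure and operadic twisting in the sense of \cite{dolgushevwillwacher2015, dotsenkoshadrinvallette2024} applies to this morphism; since the Gerstenhaber--Schack complex is complete for its natural filtration (being a product totalization), the relevant Maurer--Cartan series converge.

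The key point is then to check that the twists $c$ of $\A$ define a Maurer--Cartan element for this $\Linf$-structure. This is the operadic translation of the fact, established in \cite{vanhermanslowen2022, dinhvanhermanslowen2023}, that prestack structures are encoded as Maurer--Cartan elements of the Gerstenhaber--Schack $\Linf$-algebra, and it is where the coherence (pentagon-type) conditions on $c$ enter. Granting this, the functoriality of the twisting functor $\Tw$ --- together with its defining property that twisting an $\mathcal O$-algebra by a Maurer--Cartan element deforms the differential accordingly --- upgrades the morphism above to a morphism of dg operads $\TwQuilt \longrightarrow \End(s^{-1}\CGS(\A), d_{\mathsf{GS}})$ whose target now carries the full Gerstenhaber--Schack differential. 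Since $\TwQuilt$ is an $\E2$-operad by Corollary \ref{cormodel}, this is the desired action; as in the Hochschild case, and in accordance with the remark following Corollary \ref{cormodel}, only the underlying $\Linf$-structure (and not the $\Prelieinf$-refinement) survives the twist. Finally I would check that the operations induced on $\mathbf H_{\mathsf{GS}}(\A)$ are the classical Gerstenhaber bracket and cup product \cite{gerstenhaberschack1, lowenvandenberghCCT, DVL}, which follows by comparing the morphism above with the projection $\TwQuilt \twoheadrightarrow \TwBrace$ and tracing through the operations of low arity.

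The main obstacle is the reconciliation of the two formalisms in the middle step: one must identify the abstract $c$-twisted differential produced by operadic twisting with the concrete Gerstenhaber--Schack differential of \cite{vanhermanslowen2022}, and simultaneously confirm that the element of $s^{-1}\CGS(\A)$ built from $c$ satisfies the Maurer--Cartan equation for the $\Linf$-structure induced along $\Linf \to \Quilt \to \End(s^{-1}\CGS(\A))$, rather than for one of the other $\Linf$-structures on the Gerstenhaber--Schack complex appearing in the literature. This amounts to a bookkeeping computation with the explicit formulas for the $\Quilt$-operations and for Hawkins' homotopies, but it is the crux of the argument; the remaining steps are formal consequences of the twisting machinery.
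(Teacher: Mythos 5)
Your proposal is correct and follows essentially the same route as the paper: the action is obtained by twisting the $\Quilt$-action of \cite{vanhermanslowen2022} by the prestack's twists $c$, with the Maurer--Cartan property of $c$ and the identification of the twisted differential with the full Gerstenhaber--Schack differential both supplied by \cite{vanhermanslowen2022} rather than reproved. The only packaging difference is that the paper does not invoke the universal property of $\Tw$ on algebras directly, but instead constructs a surjection of dg operads $\TwQuilt \twoheadrightarrow \Quilt_b[\![c]\!]$ onto the operad already shown to act there, so the entire verification reduces to checking that the differential is preserved on quilts and on $\alpha \mapsto c$.
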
 
Remark that in \cite{vanhermanslowen2022} they `informally' twist $\Quilt$ by $c$ and establish an action of a new operad $\Quilt_b[\![c]\!]$ to obtain the correct $\Linf$-structure. This is subsumed in our $\TwQuilt$-action as we construct a morphism $\TwQuilt \twoheadrightarrow \Quilt_b[\![c]\!]$ through which it factors.

Interestingly, in \S \ref{parparactionpresheaves}, we obtain as a bonus, in the restricted case of presheaves, a second `orthogonal' $\TwQuilt$-action and thus solution to the Deligne conjecture. In particular, this action subsumes the action of Hawkins' operad $\mQuilt$ for presheaves from which he deduces the correct $\Linf$-structure \cite{hawkins}.

\medskip

\noindent \emph{Conventions.}

We work over a field of characteristic zero even though the results of Section \ref{parquilt} and in particular Hawkins' conjecture hold over the integers with no modifications to the proofs. 
We use cohomological conventions throughout. In particular, chains on a topological space live in non-positive degrees and have a differential of degree $+1$.

If $\sigma$ is a permutation, we use $(-1)^\sigma$ to denote its sign and $(-1)^{k+\sigma}$ should be interpreted as $(-1)^k(-1)^\sigma$.

\section{The operad $\Quilt$ and its homology}\label{parquilt}

\subsection{The operad $\Quilt$}\label{parparquilt}
In this section we recapitulate the dg operad $\Quilt$ introduced by Hawkins \cite{hawkins} and fix conventions.

\subsubsection{The operad $\Brace$}\label{parparparbrace}

The operad $\Brace$ encoding brace algebras is defined using trees, that is, planar rooted trees. Following the presentation from \cite[\S 2.2]{hawkins} and \cite[\S 2.1]{vanhermanslowen2022}, a tree $T=(V_T,E_T,\tre_T)$ consists of a set of vertices $V_T$, a set of edges $E_T$ which induces a ``vertical'' partial order $<_T$ on $V_T$, and a ``horizontal'' partial order $\tre_T$ on $V_T$, satisfying a number of properties \cite[Def. 2.3]{hawkins}. For $(u,v) \in E_T$, we call $u$ the \emph{parent} of $v$ and $v$ a \emph{child} of $u$. A \emph{leaf} is a vertex without a child. We have an induced total order on $V_T$ by setting $u\nearrow_T v$ if $u<_T v$ or $u\tre_T v$. We will depict the vertical and horizontal orders in the plane as follows
$$ \text{below} <_T \text{above} \quad \text{ and } \quad \text{left} \tre_T \text{right}.  $$
with the root at the bottom. This corresponds to the convention of \cite{vanhermanslowen2022} and reverses the direction of $<_T$ in \cite{hawkins}.

For $n\geq 1$, let $\Tree(n)$ denote the set of {planar rooted} trees with vertex set $\{1,\ldots,n\}$ labelled vertices. For example, $\Tree(3)$ contains a total of $12$ elements corresponding to the different labelings of
$$\scalebox{0.7}{$\tikzfig{tree1}$} \quad \text{ and } \quad \scalebox{0.7}{$\tikzfig{tree2}$} .$$

Let $\Brace(n)$ be the free $k$-module on $\Tree(n)$ endowed with the symmetric action by permuting its vertices and the operadic composition is given by substitution of trees into vertices, as follows. For trees $T \in \Tree(m)$, $T' \in \Tree(n)$ and $1 \leq i \leq m$, we denote by $\Ext(T,T',i) \subseteq \Tree(m + n  -1)$ the set of trees extending $T$ by $T'$ at $i$ (that is, $U \in \Ext(T,T',i)$ has $T'$ as a subtree which upon removal reduces to the vertex $i$ of $T$). We then define
$$T \circ_{i} T' := \sum_{U \in \Ext(T,T',i)} U.$$
Consider the following example
$$ \scalebox{0.7}{$\tikzfig{tree1}$} \quad \circ_1 \quad \scalebox{0.7}{$\tikzfig{tree3_green}$} \quad = \quad \scalebox{0.7}{$\tikzfig{tree4}$} \quad + \quad \scalebox{0.7}{$\tikzfig{tree5}$} \quad + \quad \scalebox{0.7}{$\tikzfig{tree9}$} \quad + \quad \scalebox{0.7}{$\tikzfig{tree6}$} \quad + \quad \scalebox{0.7}{$\tikzfig{tree7}$} \quad + \quad \scalebox{0.7}{$\tikzfig{tree8}$}   $$
In particular, the tree on two vertices $C_2:= \scalebox{0.7}{$\tikzfig{tree3_permuted}$}$ induces a $\Lie$-structure, i.e. we have a morphism
$$\Lie \longrightarrow \Brace,\quad l_2 \longmapsto C_2 - C_2^{(12)}.$$ 
The induced $\Lie$ bracket on a brace algebra is more commonly known as the Gerstenhaber bracket.

\subsubsection{The operad $\FSurj$}\label{parparparFS}

{In this section we recall the operad $\FSurj$, which is a particular model of an $\E2$-operad. 
	 Recall that in characteristic $0$ the operad $\mathsf{E}_2$ is formal \cite{lambrechts2014formality} and therefore $\FSurj$ actually encodes Gerstenhaber algebras up to homotopy.
	As the notation suggests, $\FSurj$ is the second filtration of the surjection operad $\mathsf{Surj}$, which is an $\mathsf{E}_\infty$-operad introduced in \cite{bergerfresse2004} even though we will not work with $\mathsf{Surj}$ in the present paper.}

Again, we largely follow the exposition from \cite[\S 2.3]{hawkins} and \cite[\S 2.2]{vanhermanslowen2022}, reversing the degree in order to work cohomologically. 

Given a set $A$, a \emph{word} over $A$ is an element of the free monoid on $A$. For a word $W = a_1a_2 \dots a_k$, denoting $\langle k \rangle=\{1,\dots,k\}$ we can associate to it the function $W: \langle k \rangle \longrightarrow A: i \longmapsto a_i$, the $i$-th \emph{letter} of $W$ is the couple $(i, a_i)$. We will often identify a word with its graph $W = \{ (i, a_i) \,\, |\,\, i \in \langle n \rangle\} \subseteq \langle n \rangle \times A$, writing $(i, a_i) \in W$.

For $a \in A$, a letter $(i,a) \in W$ is called an \emph{occurrence} of $a$ in $W$. The letter $(i,a)$ is a \emph{caesura} if there is a later occurrence of $a$ in $W$, that is, a letter $(j,a)$ with $i < j$. We say that $a \in A$ is \emph{interposed} in $W$ by $b$ if $W = \dots ba \dots b \dots$. \emph{length} of $W: \langle k \rangle \longrightarrow A$ is $|W| = n$.\\

Let $\FWord(n)$ be the set of words over $\langle n \rangle$ such that:
\begin{enumerate}
\item $W: \langle k \rangle \longrightarrow \langle n \rangle$ is surjective,
\item $W \neq \ldots {u}{u} \ldots$ (nondegeneracy), and
\item for any $u\neq v \in \langle n \rangle$, $W \neq \ldots {u} \ldots {v} \ldots {u} \ldots {v} \ldots$ (no interlacing).
\end{enumerate}
A word $W \in \FWord(n)$ induces two partial orders on $\langle n \rangle$: set $u<_W v$ if $W = \ldots u \ldots v \ldots u \ldots$, and $u \tre_W v$ if all occurrences of $u$ are left of the occurrences of $v$. We call $u$ \emph{a parent of} $v$ and $v$ \emph{a child of} $u$ if $u<_W v$ and they are minimal for this relation: there exists no number $w$ such that $u <_W w <_W v$ holds. We call $u$ a \emph{leaf} if it has no children, that is, it is maximal for $<_W$. Moreover, $u \rightarrow_W v$ if $u<_W v$ or $u\tre_W v$ is a total order.

Let $\FSurj(n)$ be the free $k$-module on $\FWord(n)$ endowed with the symmetric $\Ss_n$-action by permuting letters, i.e. $W^{\si} = \si\inv W$. It is naturally graded by setting $\deg(W) := n - |W|$. \\

The operadic composition on $\FSurj$ is based upon merging of words, as follows. For words $W\in \FWord(m), W' \in \FWord(n)$ and $1 \leq i \leq m$, we denote by $\Ext(W,W',i) \subseteq \FWord(m+n-1)$ the set of extensions of $W$ by $W'$ at $i$ (that is, $X \in \Ext(W,W',i)$ if up to relabelling and deleting repetitions, $W'$ is a subword of $X$ and upon collapsing the letters from $W$ to $i$, relabelling and deleting repetitions, we recover $W$). 

In order to define the composition, we need the sign of an extension. 
\medskip

\noindent \emph{Sign of Extension.}
Let $W\in \FSurj(m)$ and let $\int(W)$ be the set of elements of $\langle m \rangle$ interposed in $W$ ordered by their first occurrence in $W$. For $X \in \Ext(W,W',i)$ the relabelling gives rise to two functions
$\lh m' \rh \overset{\al}{\hookrightarrow} \lh m+m'-1 \rh \overset{\be}{\twoheadrightarrow} \lh m \rh$ which induce functions $\al:\int(W')\longrightarrow \int(X)$ and $\ga: \int(W) \longrightarrow \int(X)$ where $\ga := \be\inv$ except if $i$ is interposed in $W$, then $\gamma(i) := \al(a)$ for $(1,a)$ the first letter of $W'$.
 
As $|\int(W)| = \deg(W)$, an extension $X$ defines a unique $(\deg(W),\deg(W'))-$shuffle $\chi$ and we define 
$$\sgn_{W,W',i}(X) := (-1)^{\chi}$$

Moreover, there is a natural notion of a boundary of a word, which induces a differential.

\medskip

\noindent \emph{Boundary.}
Given a word $W \in \FWord(n)$ and a letter $(i,a)$ of $W$ for which $a$ is repeated in $W$, then define $\partial_{i}W\in \FSurj(n)$ as the word obtained by deleting the letter $(i,a)$ from $W$ (and relabelling). If $a$ is not repeated, then set $\partial_{i}W= 0$.

\medskip

\noindent \emph{Sign of Deletion.}
Given a word $W \in \FWord(n)$ of length $n$, we define $\sgn_{W}:\langle n \rangle \longrightarrow \{-1,1\}$ by setting $\sgn_{W}(i)= (-1)^{k}$ if $(i,a_i)$ is the $k$-th caesura of $W$, and otherwise $\sgn_{W}(i)= (-1)^{k+1}$ if it is the last occurrence, but the previous occurrence is the $k$-th caesura of $W$.

\medskip

The $\Ss$-module $\FSurj$ defines a dg operad with operadic composition given by 
$$ W \circ_{i} W' := \sum_{X \in \Ext(W,W',i)} \sgn_{W,W',i}(X) X$$
and boundary given by
$$\partial W := \sum_{i \in \langle |W| \rangle} \sgn_{W}(i)\partial_{i}W.$$

\begin{vb}
For words $1232,1213 \in \FWord(3)$, we have
$$ 1232 \circ_2 1213 = 1252324 - 1235324 - 1232524 - 1232454 $$ and $$ \partial(1232) = - 132 + 123 \text{ and } \partial(1213) = -213 + 123 $$
\end{vb}

\begin{lemma}
We have a morphism of operads $\FSurj \longrightarrow \mathsf{Com}$ sending a word $W\in \FWord(n)$ to the point if $|W| =n$, and $0$ otherwise.
\end{lemma}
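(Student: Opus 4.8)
The plan is to verify directly that the proposed $k$-linear map $\phi\colon\FSurj\longrightarrow\mathsf{Com}$ --- which kills every word of negative degree and sends every degree-$0$ word to $1$ --- is a morphism of dg operads, checking in turn equivariance and the unit, then compatibility with the differential, then with the operadic composition; here $\mathsf{Com}(n)=k$ is concentrated in degree $0$ with trivial $\Ss_n$-action and composition given by multiplication of scalars. The starting observation is that, since every $W\in\FWord(n)$ is surjective onto $\langle n\rangle$, we have $|W|\geq n$, so $\deg W=n-|W|\leq 0$, with equality exactly when each of the $n$ letters occurs once, i.e.\ when $W$ is (the graph of) a bijection $\langle n\rangle\to\langle n\rangle$; this is precisely the condition $|W|=n$ appearing in the statement. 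Equivariance is then immediate, since $|W^\sigma|=|W|$ and the target carries the trivial action, and the operadic unit $1\in\FWord(1)$ has length $1$, so it maps to $1\in\mathsf{Com}(1)$.

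For compatibility with the differential, since $\mathsf{Com}$ has zero differential this amounts to showing $\phi(\partial W)=0$ for all $W$. As $\partial W$ is a combination of words of length $|W|-1$, the only case to treat is $\deg W=-1$, i.e.\ $|W|=n+1$ (when $\deg W=0$ one has $\partial W=0$, and when $\deg W<-1$ every summand of $\partial W$ still lies in negative degree). For $|W|=n+1$, surjectivity forces exactly one letter $a$ to be repeated, say at positions $i_1<i_2$, all others occurring once. I would then observe that $(i_1,a)$ is the unique caesura of $W$, while $(i_2,a)$ is the last occurrence of $a$ whose preceding occurrence is that caesura, so that $\sgn_W(i_1)=-1$ and $\sgn_W(i_2)=+1$; all other $\partial_j W$ vanish since $a_j$ is not repeated. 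It then remains to note that $\partial_{i_1}W$ and $\partial_{i_2}W$ genuinely lie in $\FWord(n)$ --- deleting a letter can create neither interlacing nor a failure of surjectivity, and no new degeneracy can appear since that would force a second repeated letter --- so both are length-$n$ words, of degree $0$, whence $\phi(\partial W)=\sgn_W(i_1)+\sgn_W(i_2)=-1+1=0$.

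Lastly I would check $\phi(W\circ_i W')=\phi(W)\cdot\phi(W')$ for $W\in\FWord(m)$, $W'\in\FWord(n)$ and $1\leq i\leq m$. Because $\FSurj$ is a dg operad, every $X\in\Ext(W,W',i)$ satisfies $\deg X=\deg W+\deg W'$, and all such degrees are $\leq 0$; hence if $\deg W<0$ or $\deg W'<0$ both sides vanish. If $\deg W=\deg W'=0$, then $W$ is a bijection, so $i$ occurs exactly once in $W$; there is then a single extension $X$, obtained by substituting a relabelled copy of $W'$ for that single occurrence, it has length $m+n-1$ and hence degree $0$, and since neither $W$ nor $W'$ has an interposed letter the associated shuffle is empty, so $\sgn_{W,W',i}(X)=+1$. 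Therefore $\phi(W\circ_i W')=\phi(X)=1=\phi(W)\cdot\phi(W')$. This exhausts the cases, and the resulting morphism is visibly surjective.

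The argument is entirely formal apart from one point: the degree-$-1$ step of the differential check, where one must get the signs right --- that the two occurrences of the repeated letter contribute $-1$ and $+1$ under $\sgn_W$ --- and check that deleting either of them lands back in $\FWord(n)$. Everything else is pure bookkeeping.
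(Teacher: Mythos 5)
Your proof is correct. The paper states this lemma without any proof (it is treated as a routine check), and your direct verification is precisely the argument being elided: the only non-formal point is the degree $-1$ case of the differential, where the two occurrences of the unique repeated letter contribute $\sgn_W(i_1)=-1$ and $\sgn_W(i_2)=+1$ and cancel, and you handle this, together with the degree bookkeeping for compositions, correctly.
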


\subsubsection{The operad $\Quilt$} \label{parparparquilt}

In \cite{hawkins}, Hawkins defines a dg suboperad $\Quilt \subseteq \FSurj \otimes_H \Brace$ which we can rephrase as follows: $\Quilt(n)$ is the $k$-module spanned by $(W,T) \in \FWord(n) \times \Tree(n)$ such that
\begin{enumerate}[label=\roman*.]
	\item (Horizontality) If $u <_T v$, then $u\tre_W v$,
	\item (Verticality) if $u <_W v$, then $v \tre_T u$. 
\end{enumerate}
	In this case, we say $W$ \emph{quilts} $T$. It is clear that $\Quilt(n)$ is closed under the $\mathbb S_n$ and the differential. To see that $\Quilt$ is closed under the operadic composition, see \cite[Lemma 3.3]{hawkins}. For a quilt $Q= (W,T)$, the children of a rectangle $u$ with respect to $W$ are called its \emph{vertical} children (see \S \ref{parparparFS}), and its children with respect to $W$ its \emph{horizontal} children (see \S \ref{parparparbrace}). We denote their union as the \emph{children of $u$}. {The \emph{vertical leaves} of $Q$ are the rectangles $u$ without vertical children. The \emph{horizontal leaves} of $Q$ are the rectangles $u$ without horizontal children, that is, the leaves of the tree $T$.} { A quilt $Q=(W,T)$ is \emph{in standard order} if the total order $\nearrow_T $ on vertices agrees with the natural order on $\{1,\ldots,n\}$.}

$\Quilt$ derives its name from the pictorial presentation of its elements $(W,T)$ as quilts, as follows. Let each vertex correspond to a rectangle in the plane, then a quilt $(W,T) \in \Quilt(n)$ is a planar ordering of $n$ rectangles with possibly shaded regions inbetween and possibly certain horizontal lines are drawn double. The tree $T$ determines the horizontal adjacencies, whereas the word $W$ fixes the vertical adjacencies. Their partial orders on vertices impose on the rectangles the following planarity
\begin{alignat*}{2}
&\text{below} <_W \text{above} \quad &&\text{above} \tre_T \text{below}
 \\
  &\text{left} \tre_W \text{right}  \quad &&\text{left} <_T \text{right}
\end{alignat*}
Remark that in this sense the tree $T$ is drawn by turning 90 degrees clockwise. Each rectangle has at most one rectangle adjacent to its left and below. They can have multiple adjacent rectangles to their right and above. 

The following algorithm describes how to draw a quilt from a quilt $Q=(W,T)$. Draw the vertices of $T$ as rectangles of the following size
\begin{align*}
\text{height rectangle }i = \max\{1 \;, \; \#\text{ {horizontal} leaves to the right of }i\text{ in }T \} \\
\text{width rectangle }i = \max\{ 1 \; , \; \#\text{{vertical} leaves above }i\text{ in }W \}
\end{align*}
Draw the tree $T$ in the plane turning it $90$ degrees clockwise from the drawings in \S \ref{parparparbrace}, its root is now the leftmost rectangle. We order the rectangles vertically into columns inductively:
\begin{enumerate}
\item\label{drawingstep1} For $u_1 \tre_W \ldots \tre_W u_k$ the $<_W$-minimal rectangles, draw $k$ vertical columns and draw a shaded rectangle underneath $u_i$ of the following height
$$ \# \{ w \in \RB(u_i) : \nexists w' \in \RB(u): w <_T w' \} $$
where $\RB(u_i) := \{ w : u_i \tre_T w, w \tre_W u_i \}$ the set of rectangles to the right of and below $u_i$.
\item For $u$ drawn, repeat \eqref{drawingstep1} for $u_1 \tre_W \ldots \tre_W u_{k_u}$ the $<_W$-minimal rectangles above $u$, i.e. the children of $u$ in $W$.
\end{enumerate}
When you get to the leaves, shade the appropriate region above to make the full quilt into a rectangle. Finally, if $W=\ldots uv \ldots wu \ldots$ with no $u$ in between $v$ and $w$, then draw a double horizontal line along the edge of $u$ from the depth of $v$ till the depth of $w$. 

The above algorithm is best understood via examples.
\begin{vbn}\label{exquilt}
$$ \scalebox{0.7}{$\tikzfig{quilt1}$} \quad = \quad \left( 12324, \quad \scalebox{0.7}{$\tikzfig{quilttree1}$} \quad \right) \quad \text{ and } \quad \scalebox{0.7}{$\tikzfig{quilt2}$} \quad = \quad \left( 1232452, \quad \scalebox{0.7}{$\tikzfig{quilttree2}$} \quad \right)$$
\end{vbn}

\subsection{The homology of $\Quilt$} \label{parparhomology}

 We have a morphism of dg operads
\[p:\Quilt \hookrightarrow \FSurj \otimes_H \Brace \to \Com \otimes_H \Brace = \Brace
\]
sending $(W,T) \in \FWord(n) \times \Tree(n)$ to $T$ if $|W| = n$, and $0$ otherwise. The following theorem computes the homology of $\Quilt$, thus proving Hawkins' conjecture \cite[Conj. 3.7]{hawkins}.

\begin{theorem}\label{thmhomologyQuilt}
Over the integers, the morphism $p: \Quilt \longrightarrow \Brace$ is a quasi-isomorphism.
\end{theorem}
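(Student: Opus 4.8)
The plan is to prove that $p\colon \Quilt \to \Brace$ is a quasi-isomorphism by setting up a suitable filtration of $\Quilt$ and identifying the associated spectral sequence (or the relevant subquotient complexes) with something whose homology we already understand, namely copies of $\Brace$ placed in the right degrees together with acyclic pieces coming from the $\FSurj$ factor. Concretely, $\Quilt(n)$ is spanned by pairs $(W,T)$ with $W$ quilting $T$, and the differential only acts on the word $W$ (deleting caesuras). The projection $p$ kills everything with $|W| < n$, i.e. everything with $\deg(W) > 0$, so the content of the theorem is that the subcomplex of $\Quilt(n)$ spanned by quilts with $\deg(W)>0$ is acyclic, and that the quotient (quilts with $|W| = n$, equivalently $W$ a permutation-like linear word) maps isomorphically onto $\Brace(n)$. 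The first step is therefore to fix the tree $T$: since the differential $\partial$ only modifies $W$ and sends $(W,T)$ to a sum of $(\partial_i W, T)$ with the \emph{same} $T$, the complex $\Quilt(n)$ splits as a direct sum over $T \in \Tree(n)$ of complexes $\Quilt(n)_T := \big(\,\operatorname{span}\{(W,T) : W \text{ quilts } T\},\ \partial\,\big)$. Thus it suffices to show each $\Quilt(n)_T$ has homology concentrated in degree $0$, of dimension $1$ (spanned by the unique minimal-length word quilting $T$, which then maps to $T$ under $p$).

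The second, and main, step is to show that for a fixed $T$ the complex $\Quilt(n)_T$ of words quilting $T$ is acyclic except in top degree. The natural approach is to build an explicit contracting homotopy, or to filter by some statistic on $W$ and run a spectral sequence. I would first pin down, for a fixed $T$, exactly which words $W$ quilt $T$: the horizontality and verticality constraints (if $u <_T v$ then $u \mathrel{\tre_W} v$, and if $u <_W v$ then $v \mathrel{\tre_T} u$) say that the ``horizontal order'' of $W$ must refine the ``vertical order'' of $T$ and conversely that the ``vertical order'' of $W$ is constrained by the ``horizontal order'' of $T$. This should allow a combinatorial description of $\Quilt(n)_T$ as a kind of product/poset complex, e.g. words quilting $T$ might be built out of independent local choices at each vertex of $T$, so that $\Quilt(n)_T$ factors (up to sign bookkeeping) as a tensor product of small complexes, one per vertex or per edge, each of which is easily seen to be acyclic in positive degrees with one-dimensional $H^0$. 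Alternatively — and this is probably the cleanest route — one can define a homotopy $h$ that inserts a ``canonical'' repeated letter in a fixed position (say, inserting a second copy of the root label, or of the label of the unique $<_W$-minimal vertex, immediately after its first occurrence), check that $W$ quilting $T$ implies $hW$ quilts $T$, and verify $\partial h + h \partial = \mathrm{id} - \iota\varepsilon$ where $\varepsilon$ picks off the top-degree part and $\iota$ includes it; the signs in $\partial$ (the caesura signs $\sgn_W$) and the extra Koszul signs are designed precisely so that such a homotopy works, as in Berger--Fresse's proof of acyclicity for the surjection operad $\mathsf{Surj}$ relative to $\mathsf{Com}$.

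The third step is to handle the functoriality/operad-morphism aspect and the identification in degree $0$: once each $\Quilt(n)_T$ is shown to be acyclic in positive degrees with $H^0 = k\cdot[T]$, we get $H_*(\Quilt(n)) = \bigoplus_T k[T] = \Brace(n)$ as graded $\Ss_n$-modules, and one checks that the isomorphism is realized by $p$ (the minimal word quilting $T$ has $|W| = n$, hence survives to $\Brace$, and is sent to $T$) and that it is $\Ss_n$-equivariant and compatible with operadic composition — but this last point is automatic since $p$ is already a morphism of dg operads, so we only need that it induces an iso on homology, which follows from the $T$-by-$T$ computation. The main obstacle I anticipate is Step 2: getting the contracting homotopy (or the tensor-factorization) to respect the quilting constraints for \emph{every} tree $T$ simultaneously, and verifying the sign identities — the interaction between the caesura deletion signs, the shuffle signs, and the ``no interlacing'' condition is delicate, and a naive insertion homotopy may fail to land in $\Quilt(n)_T$ for certain $T$ (e.g. when inserting a letter would create an interlacing), so one likely needs a more careful, possibly position-dependent choice of where to insert the repeated letter, or an induction on the number of vertices of $T$ peeling off a leaf.
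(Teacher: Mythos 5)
Your first and third steps are correct and coincide with the paper's: the differential acts only on the word, so $\Quilt(n)$ splits as $\bigoplus_{T}\Quilt(T)$ over trees, and once each summand is shown to have one-dimensional homology concentrated in degree $0$ the identification with $\Brace$ via $p$ is immediate. But the entire content of the theorem sits in your Step 2, and there you only list candidate strategies without executing any of them; moreover, the concrete homotopy you do propose cannot work. If $r$ is the root of $T$, then $r<_T v$ for every other vertex $v$, so horizontality forces $r\tre_W v$, i.e.\ all occurrences of $r$ precede all other letters; combined with nondegeneracy ($W\neq\ldots rr\ldots$) this means $r$ occurs exactly once, as the first letter, in \emph{every} word quilting $T$ --- so inserting a second copy of the root label never lands in $\FWord(T)$. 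Likewise, inserting a repeated letter ``immediately after its first occurrence'' always creates a degenerate subword $uu$. The tensor-factorization alternative is asserted but not substantiated: $\FWord(T)$ does not visibly factor vertex-by-vertex. A smaller error: there is in general no ``unique minimal-length word quilting $T$'' --- any linear extension of $<_T$ is a degree-$0$ word quilting $T$ --- so one must additionally prove that all of these are homologous, which is part of what the acyclicity argument has to deliver.

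The paper's actual argument is the one you mention only as a last-resort fallback: induction on $n$, peeling off the bottommost leaf $n$ of $T$ (after putting $T$ in standard order). The differential on $\Quilt(T)$ splits as $\partial_n+\partial_{\neg n}$ according to whether the deleted letter is $n$, giving a third-quadrant double complex. The map $\red_n$ deleting all occurrences of $n$ surjects $\FWord(T)\to\FWord(T_{\neg n})$, and a combinatorial lemma shows each fiber is parametrized by the choices of gaps (among a canonical block decomposition $W=W_1\cdots W_l$) in which to insert copies of $n$; this identifies each column $(\Quilt^W_\bullet(T),\partial_n)$ with the cellular chain complex of a simplex, hence with one-dimensional homology. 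The resulting $E^1$-page is $\Quilt(T_{\neg n})[1]$ with differential $\partial_{\neg n}=\partial$, and induction on the number of vertices closes the argument. So to complete your proof you would need to supply this (or an equivalent) argument in full; as written, Step 2 is a genuine gap, not a routine verification.
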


As a first step, observe that the differential of $\Quilt$ solely involves the differential of $\FSurj$. For a tree $T$, let $\FWord(T)$ be the set of words quilting $T$ and let $\Quilt(T)$ be the dg submodule of $\FSurj$ spanned by $\FWord(T)$. We have an isomorphism of chain complexes
\begin{align}
\label{Quiltsplits}
\Quilt(n) \cong \bigoplus_{T \in \Tree(n)} \Quilt(T).
\end{align}

\subsubsection{The double complex $\Quilt(T)_{\bullet,\bullet}$} \label{parparpardoublecomplex}
Fix a tree $T \in \Tree(n)$ and we can assume its vertices are in standard order. When we draw the tree as part of a quilt, then $n$ is its bottommost leaf. We assign to each $W$ that quilts $T$ a bidegree $(\deg_n(W), \deg_{\neg n}(W))$ as follows
\begin{align*}
\deg_n(W) &:= 1 - \# \text{ occurrences of }n\text{ in }W,\\
\deg_{\neg n}(W) &:= \deg(W) - \deg_n(W) = n-|W| - \deg_n(W)
\end{align*}
Hence, $\Quilt(T)_{\bullet,\bullet}$ is a bigraded complex, concentrated in the third quadrant, whose differential $\partial$ splits as $\partial_n + \partial_{\neg n}$ where 
$$\partial_n(W) = \sum_{i:W(i)=n}\sgn_W(i) \partial_i(W) \quad\text{ and }\quad \partial_{\neg n}(W) = \partial(W) - \partial_n(W) = \sum_{i: W(i)\neq n} \sgn_W(i) \partial_i(W).$$
\begin{lemma}
$\Quilt(T)_{\bullet,\bullet}$ is a double complex.
\end{lemma}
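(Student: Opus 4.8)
The plan is to derive the whole statement from the single relation $\partial^2=0$, which already holds on all of $\FSurj$ and hence on the subcomplex $\Quilt(T)$. Indeed, by the observation recalled just before the isomorphism \eqref{Quiltsplits}, the $\Quilt$-differential does not alter the underlying tree, so $\partial$ restricts to $\Quilt(T)\subseteq(\FSurj,\partial)$, and it suffices to work inside $\FSurj$. What has to be checked is that (a) $\partial_n$ and $\partial_{\neg n}$ each preserve $\Quilt(T)$; (b) $\partial_n^2=0=\partial_{\neg n}^2$; and (c) $\partial_n\partial_{\neg n}+\partial_{\neg n}\partial_n=0$.

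The key point is a bidegree count. If $W\in\FWord(T)$ and $(i,a_i)$ is a caesura, then $\partial_i W$ has length $|W|-1$, so $\deg(\partial_iW)=\deg(W)+1$. If moreover $a_i=n$, then $\partial_iW$ has one fewer occurrence of $n$, so $\deg_n(\partial_iW)=\deg_n(W)+1$ and $\deg_{\neg n}(\partial_iW)=\deg(\partial_iW)-\deg_n(\partial_iW)=\deg_{\neg n}(W)$; hence $\partial_n$ has bidegree $(1,0)$. If instead $a_i\neq n$, the number of occurrences of $n$ is unchanged, so $\deg_n(\partial_iW)=\deg_n(W)$ and $\deg_{\neg n}(\partial_iW)=\deg_{\neg n}(W)+1$; hence $\partial_{\neg n}$ has bidegree $(0,1)$. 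In particular, for a fixed basis word $W$, every word occurring in $\partial_n W$ has strictly fewer occurrences of $n$ than every word occurring in $\partial_{\neg n}W$, so these two families of words are disjoint; since all words occurring in $\partial W=\partial_n W+\partial_{\neg n}W$ quilt $T$, so do those occurring in each of the two summands. This gives (a), and identifies $\partial_n,\partial_{\neg n}$ as the bidegree-homogeneous components of $\partial|_{\Quilt(T)}$ of bidegrees $(1,0)$ and $(0,1)$.

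With this in hand, (b) and (c) follow formally: expanding
\[
0=\partial^2=(\partial_n+\partial_{\neg n})^2=\partial_n^2+(\partial_n\partial_{\neg n}+\partial_{\neg n}\partial_n)+\partial_{\neg n}^2,
\]
the three summands are homogeneous of bidegrees $(2,0)$, $(1,1)$ and $(0,2)$, and a sum of homogeneous elements of pairwise distinct bidegrees vanishes only if each does. Hence $\partial_n^2=\partial_{\neg n}^2=0$ and $\partial_n\partial_{\neg n}+\partial_{\neg n}\partial_n=0$, which is exactly the statement that $(\Quilt(T)_{\bullet,\bullet},\partial_n,\partial_{\neg n})$ is a double complex, with $\partial_n$ acting along the $\deg_n$-direction and $\partial_{\neg n}$ along the $\deg_{\neg n}$-direction. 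I do not foresee a genuine obstacle here: the content is entirely the bidegree bookkeeping of the middle paragraph, and the squaring and anticommutation relations come for free from $\partial^2=0$. The only point worth stating explicitly is (a) — that splitting $\partial$ according to whether an occurrence of $n$ is deleted respects the quilting condition — which is automatic once one knows that $\partial_n$ and $\partial_{\neg n}$ are the homogeneous pieces of a differential already known to preserve $\Quilt(T)$.
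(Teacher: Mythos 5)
Your proof is correct, but it takes a different (and more structural) route than the paper. The paper's proof works at the level of individual face maps: it verifies $\partial_n^2=0$ and the anticommutation relation directly from the simplicial-type identities $\partial_i\partial_j=\partial_{j+1}\partial_i$ for $j\geq i$ together with the corresponding sign relation $\sgn_W(i)\sgn_{\partial_iW}(j)=-\sgn_{\partial_{j+1}W}(i)\sgn_W(j+1)$ — in effect redoing the computation that shows $\partial^2=0$, but keeping track of which deletions hit the letter $n$. You instead take $\partial^2=0$ on $\FSurj$ (hence on the dg submodule $\Quilt(T)$) as already established, check that $\partial_n$ and $\partial_{\neg n}$ are the bihomogeneous components of $\partial$ of bidegrees $(1,0)$ and $(0,1)$ for the $(\deg_n,\deg_{\neg n})$-bigrading, and then extract the three relations by separating $\partial^2=0$ into its bidegree components. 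Your bookkeeping is right: deleting an occurrence of $n$ raises $\deg_n$ by one and fixes $\deg_{\neg n}$, and deleting any other caesura does the opposite; and your observation that terms of $\partial_nW$ and $\partial_{\neg n}W$ cannot cancel (they contain different numbers of occurrences of $n$) is exactly what is needed to conclude that each summand separately lands in $\Quilt(T)$. What your argument buys is that no sign identities need to be re-verified; what it costs is the reliance on $\partial^2=0$ for $\FSurj$ as an external input — which is legitimate here, since $\FSurj$ is introduced as a dg operad before this lemma, but it is precisely the content the paper's two displayed relations establish. Either proof is acceptable.
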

 \begin{proof}
 The equations $\partial_n^2 =0$ and $\partial_n \partial_{\neg n} + \partial_{\neg n}\partial = 0$ follow from the relations: for $j\geq i$, we have $\partial_i \partial_j = \partial_{j+1}\partial_i$ and $\sgn_W(i)\sgn_{\partial_i W}(j) = - \sgn_{\partial_{j+1}W}(i)\sgn_W(j+1)$. 
 \end{proof}
 
\subsubsection{The homology of $\Quilt(T)$} \label{parparparquiltT}
 Consider the tree $T_{\neg n} \in \Tree(n-1)$ by removing the vertex $n$ from the tree $T$, i.e. its bottommost leaf.
\begin{lemma}
Removing all occurrences of $n$ induces a surjection $\red_n:\FWord(T) \longrightarrow \FWord(T_{\neg n})$.
\end{lemma}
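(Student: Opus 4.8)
The plan is to show that the map $\red_n$ simply forgets the letters equal to $n$ and that this is well-defined, surjective, and preserves the quilting conditions. First I would verify that $\red_n(W)$ lies in $\FWord(T_{\neg n})$: given $W \in \FWord(T)$, deleting all occurrences of $n$ (and relabelling, though here there is nothing to relabel since we work over the fixed vertex set $\{1,\dots,n-1\}$) yields a word $W'$ over $\langle n-1 \rangle$. I must check the three defining conditions of $\FWord(n-1)$. Surjectivity of $W'$ onto $\langle n-1 \rangle$ follows from surjectivity of $W$ together with the fact that $n$ is a leaf of $T$: no vertex $u \neq n$ can fail to appear, since every $u$ already occurs in $W$ and we only removed copies of $n$. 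Nondegeneracy ($W' \neq \dots uu \dots$) could a priori fail only if $W$ looked like $\dots u n u \dots$ with the two $u$'s now becoming adjacent; but I would argue this situation is excluded, or that if it occurred the resulting repetition collapses to a single letter without affecting membership — in fact the cleanest route is to observe that $\dots unu\dots$ with no $u$ between would force, via the quilting relations and the position of $n$ as bottommost leaf, a configuration that cannot arise (or, alternatively, one simply also collapses repetitions as part of $\red_n$, mirroring the ``deleting repetitions'' in the definition of $\Ext$). The no-interlacing condition is inherited: any interlacing pattern $\dots u \dots v \dots u \dots v \dots$ in $W'$ with $u,v \neq n$ is already an interlacing pattern in $W$, contradicting $W \in \FWord(T)$.

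Next I would check that $\red_n(W)$ actually \emph{quilts} $T_{\neg n}$, i.e.\ satisfies Horizontality and Verticality with respect to the smaller tree. The key observation is that the partial orders $<_{W'}$ and $\tre_{W'}$ on $\langle n-1\rangle$ are the restrictions of $<_W$ and $\tre_W$: for $u, v \neq n$, the pattern $W = \dots u \dots v \dots u\dots$ survives deletion of the $n$'s, and likewise ``all occurrences of $u$ left of all occurrences of $v$'' is unaffected by removing a third letter. Similarly $<_{T_{\neg n}}$ and $\tre_{T_{\neg n}}$ are the restrictions of $<_T$ and $\tre_T$ to $\langle n-1\rangle$, since deleting a leaf of a planar rooted tree does not change the vertical or horizontal order among the remaining vertices. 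Hence conditions (i) and (ii) for $(\red_n(W), T_{\neg n})$ are literally the restrictions of those for $(W, T)$, so they hold. This gives the well-definedness $\red_n : \FWord(T) \longrightarrow \FWord(T_{\neg n})$.

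Finally, surjectivity. Given $W' \in \FWord(T_{\neg n})$, I must produce a word $W$ quilting $T$ with $\red_n(W) = W'$. Let $p$ be the parent of $n$ in $T$ (it exists since $n$ is not the root; if $n$ were the root the statement would be handled trivially or that case excluded by $n$ being the bottommost \emph{leaf}). The recipe is to insert a single occurrence of the letter $n$ into $W'$ at an appropriate place — the natural choice being immediately after the first occurrence of $p$, or more precisely into the ``slot'' dictated by the horizontal position of $n$ among the children of $p$. One then checks that the resulting word $W$ is in $\FWord(n)$ (surjectivity and nondegeneracy are clear; no-interlacing holds because $n$ occurs only once, so it cannot participate in an interlacing pattern) and that it quilts $T$: Horizontality and Verticality involving $n$ reduce, since $n$ has a single occurrence, to statements about $\tre_W$ between $n$ and the other vertices, which one arranges by placing the unique $n$ correctly relative to $p$ and the siblings of $n$. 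Since $n$ occurs exactly once, $\red_n(W) = W'$ on the nose. The main obstacle I anticipate is the bookkeeping in this surjectivity step: one must insert $n$ so that \emph{every} Horizontality/Verticality inequality involving $n$ is satisfied simultaneously, and verifying that such a slot always exists (and that the insertion does not break nondegeneracy by creating an adjacent repeated letter) requires a careful but elementary case analysis on where $p$ and the siblings of $n$ sit in $W'$. Everything else is a direct unwinding of definitions.
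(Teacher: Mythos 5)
Your overall skeleton matches the paper's: check membership in $\FWord(n-1)$, check that the quilting conditions restrict, then exhibit a preimage. Two of your steps, however, are not yet proofs. For nondegeneracy you correctly isolate the only danger ($W=\ldots u n\ldots n u\ldots$ collapsing to $\ldots uu\ldots$) and you name the right ingredients, but you then hedge between ``this cannot arise'' and ``collapse the repetition as part of $\red_n$''. Only the first option is viable (redefining $\red_n$ to collapse repetitions would change the map and spoil its later use in Lemma \ref{lemdecompositionword}), and the argument you owe is short: such a pattern gives $u<_W n$, so Verticality forces $n\tre_T u$, which is impossible because $n$ is the $\nearrow_T$-maximal vertex of $T$. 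This needs to be written out, not asserted.

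The more serious gap is surjectivity. Your recipe --- insert a single $n$ immediately after the first occurrence of its parent $p$ --- fails in general: Horizontality demands $u\tre_W n$ for \emph{every} ancestor $u$ of $n$, i.e.\ \emph{all} occurrences of $u$ must lie to the left of the inserted $n$, and Verticality additionally forbids $n$ from being interposed between two occurrences of any letter. Inserting after the \emph{first} occurrence of $p$ already violates the former whenever $p$ occurs more than once. You acknowledge that a case analysis is needed to find a valid slot, but you do not carry it out, so the existence of a preimage is not established. The case analysis is in fact unnecessary: because $n$ is the bottommost leaf, one can simply append $n$ at the very end, $W\mapsto Wn$. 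Then surjectivity, nondegeneracy and no-interlacing are immediate, every quilting condition involving $n$ is either vacuous ($n$ is a leaf and occurs once, last) or trivially satisfied, and $\red_n(Wn)=W$. As written, the surjectivity step is a genuine gap, resolved by this simpler choice of preimage.
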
 
\begin{proof}
First, we verify that $\red_n(W) \in \FWord$ for $W \in \FWord$. It suffices to only check the nondegeneracy condition: suppose $\red_n(W)= \ldots uu \ldots$ for some number $u$, then $W = \ldots unu \ldots$ and thus $u<_W n$ whence $n \tre_T u$. As $n$ is the bottommost leaf, nondegeneracy thus cannot occur in $\red_n(W)$.

Next, we verify that $\red_n(W)$ quilts $T_{\neg n}$ if $W$ quilts $T$. This follows from the following observation: for $u,v<n$, we have that $u <_{\red_n(W)} v$ if and only if $u <_W v$, and $u \tre_{\red_n(W)} v$ if and only if $u \tre_W v$. 

Finally, for $W \in \FWord(T_{\neg n})$, we have that $W_n \in \FWord(T)$ and $\red_n(Wn)=W$. 
\end{proof}
  For a word $W \in \FWord(T_{\neg n})$, let $\Quilt_{\bullet}^W(T)$ be the subcomplex of $(\Quilt_{\bullet,\deg(W)}(T),\partial_n)$ spanned by words $W'$ such that $\red_n(W')= W$. Notice that $\Quilt_{\bullet}^W(T)$ lives in degrees $\leq 0$. Observe that $\deg(W)$ and $\deg_n(W')$ determine the bidegree of $W'$ since $\deg(W')= \deg(W)+\deg_n(W')$. We have an isomorphism of chain complexes
  \begin{align}
  \label{QuiltTsplits} (\Quilt_{\bullet,\deg(W)}(T), \partial_n) \cong \bigoplus_{W \in \FWord(T_{\neg n})} (\Quilt_{\bullet}^W(T),  \partial_n).
\end{align}   
 
We have a unique description of every word $W'\in \FWord(T)$ that reduces to $W$.

\begin{lemma}\label{lemdecompositionword}
Given $W \in \FWord(T_{\neg n})$, there is a unique decomposition into subwords $W=W_1 \ldots W_l$ such that any number appears in exactly one $W_i$.
Furthermore, elements in $\red_n^{-1}(W)$ are obtained by inserting $n$ after certain $W_i$, i.e.
$$ \red_n^{-1}(W) = \left\{\; W_1 \ldots W_{i_1} n \ldots  W_{i_j}n \ldots W_{i_{k}} nW_{i_{k}+1} \ldots W_{l} \; \middle| \; 1 \leq i_1< \ldots <i_k \leq l \text{ and } k\geq 1 \; \right\} $$
\end{lemma}
\begin{proof}
	Let $u_1 \tre_W \ldots \tre_W u_l$ be the $<_W$-minimal numbers amongst $\{1,\ldots,n-1\}$ and let $W_i$ be the subword of $W$ starting with the first occurrence of $u_i$ and the ending with the last occurrence of $u_i$. Due to no interlacing, the words $W_1,\ldots,W_l$ are disjoint. Moreover, again due to no interlacing, all occurrences of a number $u$ in $W$ occur in a single $W_i$, namely for $i$ such that $u_i <_W u$. Hence, $W = W_1 \ldots W_l$. For example, the word $152563436787$ decomposes as $W_1W_2W_3W_4$ where $W_1 = 1, W_2 = 525, W_3 = 63436$ and $W_4 = 787$.

Let $W'\in \red_n^{-1}(W)$. As $n$ is the bottommost leaf of $T$ by assumption, $n$ is minimal for $<_{W'}$. Moreover, due to $\red_n(W')=W$, the corresponding unique decomposition for $W'$ is given by $W_1 \ldots W_{i_1} W'' W_{i_k+1} \ldots W_l$ for some $1 \leq i_1 < i_k \leq l$ and where $W''$ is the subword of $W'$ starting with the first occurrence of $n$ and ending with the last occurrence of $n$. 

We analyse the word $W''$ further: the $<_{W'}$-minimal numbers above $n$ are $u_{i_1+1},\ldots,u_{i_{k}}$ due to $\red_n(W')=W$. Hence, a similar reasoning tells us $W''= n W_{i_1+1} \ldots W_{i_2} n \ldots n  W_{i_{k-1}+1} \ldots W_{i_{k}}n$ for some $i_1<i_2<\ldots<i_{k}$, proving the result. Note that $k= \deg_n(W')+1$.
\end{proof}


\begin{lemma}\label{lemhomologyQuiltWT}
The homology of $\Quilt_{\bullet}^W(T)$ is free of rank one and concentrated in degree $0$. In particular, any quilt $(W',T)$ with a single $n$ and such that $\red_n(W')=W$ represents the class spanning $H_0(\Quilt_{\bullet}^W(T))$.
\end{lemma}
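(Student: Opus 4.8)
The plan is to recognise $\Quilt_{\bullet}^W(T)$, up to an isomorphism of chain complexes, as the (unaugmented) simplicial chain complex of a standard simplex, whose homology is classical. First I would fix the decomposition $W=W_1\cdots W_l$ provided by Lemma~\ref{lemdecompositionword} (so $l$ is the number of $<_W$-minimal elements of $\langle n-1\rangle$). By the two parts of that lemma, the words $W'\in\FWord(T)$ with $\red_n(W')=W$ are exactly the words $W'_S$ obtained, for a nonempty subset $S=\{i_1<\cdots<i_m\}\subseteq\{1,\dots,l\}$, by inserting one copy of the letter $n$ immediately after each block $W_i$ with $i\in S$; moreover $\deg_n(W'_S)=-|S|$. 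This yields a basis of $\Quilt_{\bullet}^W(T)$ indexed by the nonempty subsets of $\{1,\dots,l\}$, with $W'_S$ in degree $|S|-1$.

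Next I would compute the differential. Since $\partial_n$ only deletes letters equal to $n$, and deleting the $k$-th copy of $n$ from $W'_S$ merely merges the two surrounding blocks — this never creates a repetition, because consecutive blocks end resp. begin with distinct minimal elements $u_i\ne u_{i+1}$, and the result again lies in $\FWord(T)$ by Lemma~\ref{lemdecompositionword} — one gets $\partial_{i}W'_S=W'_{S\setminus\{i_k\}}$ for the position $i$ of that copy. Hence $\partial_n(W'_S)=\sum_{k=1}^{m}\sgn_{W'_S}(p_k)\,W'_{S\setminus\{i_k\}}$ when $|S|\ge 2$ (with $p_1<\cdots<p_m$ the positions of $n$ in $W'_S$), while $\partial_n(W'_{\{i\}})=0$ since the unique $n$ is not repeated. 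So $\partial_n$ is a signed ``delete one element of $S$'' operator vanishing on singletons. To pin down the signs, write $c(i)$ for the number of caesuras of $W_1\cdots W_i$ and $\lambda_i=(-1)^{c(i)+1}$; a direct inspection of the definition of $\sgn_W$ in \S\ref{parparparFS} gives $\sgn_{W'_S}(p_k)=\lambda_{i_k}(-1)^{k-1}$ for $k<m$ and $\sgn_{W'_S}(p_m)=\lambda_{i_{m-1}}(-1)^{m-1}$, the only point being that the distinction in $\sgn_W$ between a caesura and a last occurrence makes consecutive copies of $n$ contribute opposite extra signs. Rescaling the basis by $\widetilde W'_S:=\big(\textstyle\prod_{i\in S\setminus\{\max S\}}\lambda_i\big)W'_S$ then turns $\partial_n$ into the standard alternating boundary $\widetilde W'_S\mapsto\sum_k(-1)^{k-1}\widetilde W'_{S\setminus\{i_k\}}$. (Alternatively one may avoid the explicit rescaling: since $\partial_n$ is a differential of the double complex of \S\ref{parparpardoublecomplex}, $\partial_n^2=0$, and any ``delete one element'' differential with this property is isomorphic to the standard one, the required signs being produced by induction on $|S|$ with consistency guaranteed precisely by $\partial_n^2=0$.) Thus $(\Quilt_{\bullet}^W(T),\partial_n)$ is isomorphic to the (unaugmented) simplicial chain complex $C_\bullet(\Delta^{l-1})$ of the standard $(l-1)$-simplex on vertex set $\{1,\dots,l\}$, with $W'_S$ the face $S$ in simplicial degree $|S|-1$.

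Now the conclusion is immediate: $\Delta^{l-1}$ is connected and collapsible, so $H_\bullet(C_\bullet(\Delta^{l-1}))$ is one-dimensional, concentrated in degree $0$ and generated by the class of any vertex. Translating back, $H(\Quilt_{\bullet}^W(T))$ is one-dimensional; it lives in simplicial degree $0$, i.e. at $\deg_n=-1$, and since the whole of $\Quilt_{\bullet}^W(T)$ sits in the column $\deg_{\neg n}=\deg(W)+1$ of the double complex this is the asserted ``degree $\deg(W)+1$''. Its generator is the common class of the words $W'_{\{i\}}=W_1\cdots W_i\,n\,W_{i+1}\cdots W_l$, $1\le i\le l$, which are pairwise homologous since $W'_{\{j\}}-W'_{\{i\}}=\pm\,\partial_n(W'_{\{i,j\}})$; in particular any quilt $(W',T)$ with $\red_n(W')=W$ and a single occurrence of $n$ represents it.

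The main obstacle is the sign bookkeeping in the third step: either the explicit verification that $\sgn_{W'_S}$ has the stated form, or the soft argument that $\partial_n^2=0$ forces the ``delete one element'' differential to be standard after a diagonal rescaling. Everything else follows directly from the combinatorics already set up in Lemma~\ref{lemdecompositionword} and in \S\ref{parparparFS}.
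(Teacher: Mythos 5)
Your proof is correct and takes essentially the same route as the paper: the paper's own (one-line) proof likewise uses Lemma~\ref{lemdecompositionword} to identify $\Quilt_{\bullet}^W(T)$ with the cellular chain complex of the simplex $\Delta^{l-1}$, whose homology is one-dimensional and generated by any vertex. You have merely supplied the basis description and the sign bookkeeping for $\partial_n$ that the paper leaves implicit, and both check out.
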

\begin{proof}
Lemma \ref{lemdecompositionword} provides an isomorphism
\begin{equation}\label{eq:phi}
	\phi:\Quilt_{\bullet}^W(T) \longrightarrow \mathbf C^{\mathrm{cell}}_{\bullet}(\Delta^{l-1})
\end{equation}
where $l$ is the number of subwords given in Lemma \ref{lemdecompositionword} and the right hand side is the cellular chain complex of the $(l-1)$th simplex (living in negative degrees, due to our cohomological conventions).
%
\end{proof}

\begin{prop}\label{prophomologyQuiltT}
The homology of $\Quilt(T)$ is free of rank one and concentrated in degree $0$. In particular, any quilt $(W,T)$ of degree $0$ represents the class generating $H_0(\Quilt(T))$.
\end{prop}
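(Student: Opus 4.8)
The plan is to deduce Proposition \ref{prophomologyQuiltT} from Lemma \ref{lemhomologyQuiltWT} by running the spectral sequence of the double complex $\Quilt(T)_{\bullet,\bullet}$, filtered so that $\partial_n$ is the $0$-th differential. Since the complex is concentrated in the third quadrant, this filtration is bounded in each total degree and the spectral sequence converges to $H_\bullet(\Quilt(T))$.

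First I would compute the $E^1$-page. By the splitting \eqref{QuiltTsplits}, the $\partial_n$-homology of the column of words $W'$ with $\red_n(W') = W$ is exactly $H_\bullet(\Quilt^W_\bullet(T))$, which Lemma \ref{lemhomologyQuiltWT} identifies as one-dimensional, concentrated in homological degree $\deg(W)+1$ — that is, in the row $\deg_n = -1$ after accounting for the bidegree bookkeeping $\deg(W') = \deg(W)+1+\deg_n(W')$. Concretely, the only surviving bidegree is $\deg_n = -1$, so $E^1$ is the chain complex with one generator for each $W \in \FWord(T_{\neg n})$, placed in degree $\deg_{\neg n}(W) = \deg(W)$, i.e. the $E^1$-page is (up to a degree shift) the complex $\Quilt(T_{\neg n})$ with its differential $\partial_{\neg n}$, via the surjection $\red_n$ and the section $W \mapsto Wn$. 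Here I should check that the induced $d^1$ differential agrees with the differential of $\Quilt(T_{\neg n})$; this is where the relation $\red_n(\partial_{\neg n} Wn) = \partial(W)$ (up to signs, using the sign compatibilities already recorded in the proof that $\Quilt(T)_{\bullet,\bullet}$ is a double complex) must be verified.

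Then I would invoke induction on $n$. The base case $n=1$ is trivial: $\Tree(1)$ has one element, $\Quilt$ of it is $k$ in degree $0$. For the inductive step, the tree $T_{\neg n}$ has $n-1$ vertices, so by the induction hypothesis $H_\bullet(\Quilt(T_{\neg n}))$ is one-dimensional concentrated in degree $0$. Hence $E^2 = E^\infty$ is one-dimensional concentrated in total degree $0$, and by convergence $H_\bullet(\Quilt(T))$ is one-dimensional concentrated in degree $0$. Finally, to see that any degree-$0$ quilt $(W,T)$ represents the generator: such a $W$ has $|W| = n$, so $\deg_n(W) = -1$ and $\red_n(W) \in \FWord(T_{\neg n})$ has degree $0$; by the induction hypothesis its class generates $H_0(\Quilt(T_{\neg n}))$, and tracing back through the spectral sequence identification (together with the ``In particular'' clause of Lemma \ref{lemhomologyQuiltWT}, which says $(W,T)$ already represents the $E^1$-class in its column) shows $(W,T)$ generates $H_0(\Quilt(T))$.

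The main obstacle I anticipate is purely bookkeeping rather than conceptual: pinning down the signs and the precise degree shift so that the $E^1$-page is genuinely isomorphic as a complex to $\Quilt(T_{\neg n})$, not merely abstractly quasi-isomorphic. One must check that under $\red_n$ the surviving $\partial_n$-homology classes (represented by the ``all the $n$'s pushed to one spot'' words) are carried to one another by $\partial_{\neg n}$ with exactly the signs of $\partial$ on $\Quilt(T_{\neg n})$; the sign lemma for deletions ($\sgn_W(i)\sgn_{\partial_i W}(j) = -\sgn_{\partial_{j+1}W}(i)\sgn_W(j+1)$) together with the fact that deleting an occurrence of $n$ does not change whether any $u \neq n$ is a caesura should make this go through, but it requires care. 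Alternatively — and this may be cleaner — one can avoid the spectral sequence entirely and give the same argument as an explicit filtration-by-$\deg_n$ / long-exact-sequence induction, or even exhibit a contracting homotopy directly from the simplex-collapse of Lemma \ref{lemhomologyQuiltWT}; I would present whichever keeps the sign verification shortest.
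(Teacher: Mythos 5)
Your proposal is correct and follows essentially the same route as the paper: the spectral sequence of the third-quadrant double complex $\Quilt(T)_{\bullet,\bullet}$ with $d^0=\partial_n$, the identification of the $E^1$-page with (a shift of) $\Quilt(T_{\neg n})$ via the splitting \eqref{QuiltTsplits} and Lemma \ref{lemhomologyQuiltWT}, and induction on the number of vertices. The sign/degree bookkeeping you flag for the identification $d^1=\partial_{\neg n}\leftrightarrow\partial_{\Quilt(T_{\neg n})}$ is indeed the only point the paper asserts without detail, and your proposed verification of it is the right one.
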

\begin{proof}
The double complex $\Quilt(T)_{\bullet,\bullet}$ is concentrated in the third quadrant and thus its horizontal filtration 
$$F_s \Quilt(T)_t = \bigoplus_{\substack{a+b=t \\ b\geq s} }\Quilt(T)_{a,b}$$
induces a converging spectral sequence. As $E^0_{st}= \Quilt(T)_{s,t}$ and $d^0 = \partial_n$, we obtain by \eqref{QuiltTsplits}
$$E^1_{st} = H_s(\Quilt(T)_{\bullet,t}) = \bigoplus_{\substack{W \in \FSurj(T_{\neg n}) \\ \deg(W)= t-1}} H_{s}(\Quilt_{\bullet}^W(T)).$$

By  Lemma \ref{lemhomologyQuiltWT}, if $s\ne 0$ this is zero and otherwise

$$E^1_{0t} \cong \Quilt(T_{\neg n})_{t-1}.$$
 Moreover, under these identifications, its differential $d^1= \partial_{\neg n}$ corresponds to the differential $\partial$ of $\Quilt(T_{\neg n})[1]$. As $T_{\neg n}$ has strictly fewer vertices than $T$, we obtain by induction on $n$ that $E^2$ is concentrated in degree $0$, free of rank one and living only in the first filtration piece.
{Due to convergence, $E^2$ computes the homology of $\Quilt(T)$.}
\end{proof}

\begin{proof}[Proof of Theorem \ref{thmhomologyQuilt}]
We verify that the morphism of dg operads $p:\Quilt \longrightarrow \Brace$ is a quasi-isomorphism. As $\Brace$ is a dg operad concentrated in degree $0$ with trivial differential, it suffices to show that $H_s(\Quilt)=0$ for $s\neq 0$ and that $H(p):H_0(\Quilt) \longrightarrow \Brace$ is an isomorphism. By Proposition \ref{prophomologyQuiltT} and \eqref{Quiltsplits}, the first condition holds. Furthermore, they show that $H_0(\Quilt(n))\cong \bigoplus_{T \in \Tree(n)}k$ and that moreover for every $T \in \Tree(n)$, the unique generating class can be represented by any quilt $(W,T)\in \Quilt(n)$ of degree $0$. Hence, the projection $p$ induces an isomorphism $H_0(\Quilt(n)) \cong \Brace(n)$.   
\end{proof}

\subsection{The morphism $\Prelieinf \longrightarrow \Quilt$}\label{parparprelie}

We show that the morphism $\Linf\longrightarrow \Quilt$ established in \cite[Thm. 7.8]{hawkins} factors through the the operad $\Prelieinf$, the minimal model of the Koszul operad $\PreLie$, lifting the morphism $\PreLie \longrightarrow \Brace$ in homology.
\subsubsection{The operad $\Prelieinf$}

The operad $\PreLie$ is Koszul with Koszul dual operad $\Perm$, which is $n$-dimensional in arity $n$ \cite[Prop. 2.1]{chapotonlivernet2001}. As a result, its minimal model $\Prelieinf$ is generated by the operations 
$$pl_n \in \Prelieinf(n) \text{ of degree } 2-n $$ 

such that $pl_n^\si  =(-1)^\si pl_n$ for $\si \in \Ss_n$ such that $\si(1)=1$, and with differential 
\begin{equation*}
{\partial(pl_n) = \sum_{\substack{k+l = n+1 \\ k,l \geq 2 }} \sum_{\substack{\chi \in \Sh_{l,k-1} \\ \chi(1) =1 }}(-1)^{l(k-1)+ \chi+1} (pl_k \circ_1 pl_l)^{\chi^{-1}} + \sum_{\substack{\chi \in \Sh_{l+1,k-2} \\ \chi(1) =1 }}\sum_{j=1,\ldots,l} (-1)^{kl+\chi+(1j)+1}  \left(pl_k \circ_2 pl_l^{(1j)}\right)^{\chi^{-1}} \label{prelierel}}
\end{equation*}
We have a morphism $\Linf \longrightarrow \Prelieinf$ sending $l_n$ to $\sum_{j=1}^n {(-1)^{(1j)}} pl_n^{(1j)}=pl_n - \sum_{j=2}^n pl_n^{(1j)}$.

\subsubsection{The morphism $\Prelieinf \longrightarrow \Quilt$}\label{parparparPreLietoQuilt}
For $n\geq 2$, the morphism $\Linf \longrightarrow \Quilt$ sends $l_n$ to the operations $L_n$ which are defined as the antisymmetrization of operations $P_n$, i.e $$L_n := \sum_{\si \in \Ss_n} (-1)^\si P_n^\si \quad \text{ where } \quad P_n := \sum_{\substack{ Q \in \mathrm{Quilt}(n) \\ \deg(Q)= 2-n \\ Q \text{ in standard order}}} (-1)^{1+ \frac{n(n-1)}{2}} Q$$
Note that we have reversed the degrees of both $\Linf$ and $\Quilt$ with respect to \cite{hawkins} and \cite{vanhermanslowen2022}. In \cite[Ex. 4.7]{vanhermanslowen2022}, the quilts making up $L_2,L_3$ and $L_4$ are drawn.

\begin{mydef}
For $n\geq 2$, define the degree $2-n$ operations 
$$ PL_n := \sum_{\substack{\si \in \Ss_n \\ \si(1) = 1 }} (-1)^\si P_n^\si \in \Quilt(n).$$
\end{mydef}

\begin{prop}\label{propprelieinf}
We have a morphism 
$$\Prelieinf \longrightarrow \Quilt$$
sending $pl_n$ to $PL_n$.
\end{prop}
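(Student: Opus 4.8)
The plan is to use that $\Prelieinf$, being the minimal model of the Koszul operad $\PreLie$, is quasi-free: as a graded operad it is freely generated by the operations $pl_n$, subject only to their partial antisymmetry, and equipped with the decomposable differential displayed above. Hence a morphism of dg operads $\Prelieinf\longrightarrow\Quilt$ amounts to a choice, for each $n\geq 2$, of an element of $\Quilt(n)$ of degree $2-n$ which is antisymmetric under the stabilizer $\mathrm{Stab}(1)\subset\Ss_n$ of $1$, and whose $\Quilt$-differential equals the image of $\partial(pl_n)$ under the prescribed assignment. (That \emph{some} lift of $\PreLie\to\Brace$ along this data exists is automatic, since $\Prelieinf$ is cofibrant and $p\colon\Quilt\to\Brace$ is a quasi-isomorphism by Theorem~\ref{thmhomologyQuilt}; the content of the proposition is that the explicit $PL_n$ do the job.) That $PL_n$ has arity $n$ and degree $2-n$ is immediate from its definition, and for $n=2$ one has $\partial(pl_2)=0$ while $PL_2=C_2$ is a cycle in $\Quilt$, so the base case is clear. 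The partial antisymmetry is a one-line reindexing: for $\tau\in\mathrm{Stab}(1)$, right multiplication by $\tau$ is a sign-uniform bijection of $\mathrm{Stab}(1)$, whence $PL_n^{\tau}=\sum_{\sigma(1)=1}(-1)^{\sigma}P_n^{\sigma\tau}=\sum_{\rho(1)=1}(-1)^{\rho\tau^{-1}}P_n^{\rho}=(-1)^{\tau}PL_n$.

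The heart of the proof is the differential relation, which I would derive from a boundary formula for the basic operations $P_n$ inside $\Quilt$. Recall that the $\Quilt$-differential acts only on the word component of a quilt $(W,T)$, by deleting a repeated letter; since the quilts assembling $P_n$ have degree $2-n$, each nonzero term $\sgn_W(i)\,\partial_i(W,T)$ of $\partial P_n$ is again a quilt, now of degree $3-n$. The key combinatorial claim is that every such term decomposes canonically as an operadic composite: deleting the distinguished repeated letter amputates a sub-quilt on $l$ vertices of degree $2-l$ from an ambient quilt on $k$ vertices of degree $2-k$, where $k+l=n+1$, and, using the standard-order normalization of the source together with a short length count on no-interlacing words, the amputated piece is always grafted at one of exactly two distinguished inputs, yielding terms of the shapes $P_k\circ_1 P_l$ and $P_k\circ_2 P_l^{(1j)}$, with the transposition $(1j)$ recording which vertex of the grafted piece becomes its new root. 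Conversely, every such composite, distributed over the relevant shuffle of its inputs, arises in this way exactly once. Summing over all quilts and all deleted letters, then antisymmetrizing over $\mathrm{Stab}(1)$, each composite $PL_k\circ_1 PL_l$ (respectively $PL_k\circ_2 PL_l^{(1j)}$) occurs once for every $(l,k-1)$-shuffle (respectively every $(l+1,k-2)$-shuffle) fixing $1$, which are precisely the two shuffle sums appearing in $\partial(pl_n)$; matching the combinatorial signs against $(-1)^{l(k-1)+\chi+1}$ and $(-1)^{kl+\chi+(1j)+1}$ then completes the verification.

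The main obstacle is the sign bookkeeping: one must reconcile the normalization $(-1)^{1+\frac{n(n-1)}{2}}$ in the definition of $P_n$, the deletion signs $\sgn_W(i)$ of the $\Quilt$-differential, the extension signs governing operadic composition in $\FSurj\otimes_H\Brace$, and the shuffle signs prescribed for $\partial(pl_n)$, all while tracking the degree conventions, which we have moreover reversed relative to \cite{hawkins}. Two consistency checks keep this under control. First, fully antisymmetrizing the proposed identity over $\Ss_n$ must collapse it, via the coset identity $L_n=\sum_{j=1}^{n}(-1)^{(1j)}PL_n^{(1j)}$, to Hawkins' already established $\Linf$-relation for $L_n$ \cite[Thm. 7.8]{hawkins}; this is both a strong partial verification and a way to pin down most of the signs. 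Second, applying the quasi-isomorphism $p\colon\Quilt\to\Brace$ sends $PL_2$ to $C_2$ and kills $PL_n$ for $n\geq 3$, so that the morphism descends in homology to the canonical $\PreLie$-structure $C_2$ on $\Brace$, which is the final assertion of the proposition and which, read backwards, fixes the overall normalization.
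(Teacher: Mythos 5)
Your overall strategy coincides with the paper's: reduce the proposition to the quilt-level identity obtained by unraveling $\partial(pl_n)$ for the $PL_n$, and then match the terms of $\partial(P_n^\sigma)$ bijectively, with signs, against the terms of the composites $(P_k\circ_i P_l)^\sigma$. The difference is in execution. The paper does not redo this matching: it observes that Hawkins' proof of the $\Linf$-relation \cite[Thm. 7.8]{hawkins} already establishes exactly such a sign-coherent bijection over all of $\Ss_n$, and adds the single new observation that both $\partial$ and $\circ_i$ preserve the label of the root of a standard-order quilt, so that the bijection restricts to the sub-sums over $\sigma$ with $\sigma(1)=1$. That one observation is the entire content of the paper's proof. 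In your write-up this load-bearing point appears only implicitly (in the claim that the relevant shuffles fix $1$), while the actual combinatorial core --- your ``key combinatorial claim'' that each deletion term decomposes canonically as a composite, and the sign reconciliation --- is asserted rather than proved; as it stands this is a plan for reproving Hawkins' theorem with extra bookkeeping, not a complete argument. One assertion is also imprecise: the amputated sub-quilt is not grafted ``at one of exactly two distinguished inputs'' --- the unraveled identity sums over all positions $i=1,\dots,k$, and the reduction to $\circ_1$ and $\circ_2$ only happens after invoking the $\mathrm{Stab}(1)$-equivariance of the $pl$'s. Your two consistency checks (recovering the $\Linf$-relation by full antisymmetrization, and compatibility with $p:\Quilt\to\Brace$) are sensible sanity tests but do not substitute for the matching itself. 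If you make the root-preservation observation explicit and cite Hawkins' bijection instead of reconstructing it, your argument collapses to the paper's.
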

\begin{proof}
Unraveling the relation \eqref{prelierel} for the operations $PL_n$, we aim to show the equation
\begin{equation}\label{prelieinfquilt}
\sum_{\substack{\si \in \Ss_n \\ \si(1) = 1 }} (-1)^\si \partial (P_n^\si) = \sum_{\substack{\si \in \Ss_n \\ \si(1) = 1  }} \sum_{\substack{ k+l =n+1 \\ k,l \geq 2 \\ i=1,\ldots,k}}  (-1)^\si (-1)^{(k-1)l+(i-1)(l-1)}(P_k \circ_i P_l)^\si.
\end{equation}
The proof of \cite[Thm. 7.8]{hawkins} consists of showing that for each quilt appearing in either $\partial(P_n)$ or $P_k \circ_i P_l$ for $k+l=n+1$ and index $i$, there is a unique counterpart in either $\partial(P_n^\si)$ or $(P_{k'} \circ_{i'} P_{l'})^{\si}$ for unique numbers $k',l',i'$ and unique permutation $\si \in \Ss_n$. We observe that for quilts $Q$ and $Q'$ in standard order, the labelling of the root of the quilts appearing in either $Q \circ_i Q'$ and $\partial(Q)$ remains unchanged, that is, it keeps label $1$ under these operations. Hence, this is also true for the quilts appearing in $P_k \circ_i P_l$ or $\partial(P_n)$ and thus the number $1$ is a fixpoint of the above unique permutation $\si$. We deduce that the proof of \cite[Thm. 7.8]{hawkins} descends to a proof of equation \eqref{prelieinfquilt}.
\end{proof}

\section{A new model for $\mathsf{E}_2$}\label{parmodel}
%
%
%

\subsection{Twisting of $\Brace$} \label{parpartwistbrace}
Recall that we have a morphism $\Lie \longrightarrow \Brace$ given by $l_2:=C_2 - C_2^{(12)}$, the antisymmetrisation of the $2$-corolla. We apply the twisting procedure for operads as in \cite[\S 5.5]{dotsenkoshadrinvallette2024}. {From now on, we work over a field of characteristic zero, as the twisting formalism is not defined otherwise.}
\begin{mydef}\label{deftwbrace}
Let $\TwBrace$ be the twisting of $\Brace$ by a MC-element, i.e. 
$$\TwBrace = (\Brace \vee m, \partial^m)$$
the coproduct of $\Brace$ with a formal element $m$ of arity $0$ and degree $1$, with differential 
\begin{align*}
\partial^m(m) &= \frac{1}{2} l_2(m,m)  \\
\partial^m(T) &= l_2(m,T) - \sum_{j=1}^m T \circ_j l_2(m,-).
\end{align*}  

for $T \in \Brace(m)$.
\end{mydef}


Following \cite[\S 9]{dolgushevwillwacher2015} and \cite[Prop. 5.23]{dotsenkoshadrinvallette2024}, we provide a $k$-module basis of $\TwBrace(n)$ as follows. Let a \emph{tree with black vertices} $(T,I)$ consist of a tree $T\in \Tree(n+n')$ and a subset $I$ of $\{1,\ldots,n+n'\}$ of cardinality $n'$. The tuple can be drawn as a tree $T$ of $n+n'$ vertices such that each vertex $i\in I$ is coloured black. {Vertices in $I$ are considered indistinguishable (or unlabeled), whereas the remaining $n$ vertices preserve their linear order, or equivalently, are labeled from $1$ to $n$.} These correspond to the elements of $\TwBrace(n)$ consisting of a tree $T$ such that each vertex $i\in I$ is filled by an instance of $m$ through composition. Observe that composition of trees with black vertices is simply composing their underlying trees and colouring the correct vertices black.

\begin{mydef}\label{deftwbracebar}
Let $\TwBraceover$ be the graded suboperad of $\TwBrace$ spanned by the trees with black vertices such that each black vertex has at least two children.
\end{mydef}
\begin{opm}
In contrast with $\TwBrace$, $\TwBraceover$ is finite dimensional in all arities. In \cite[\S 9]{dolgushevwillwacher2015}, $\TwBraceover$ is denoted $\Br$.
\end{opm}
The following is a combination of results on the surjection operad \cite{bergerfresse2004} with theorems \cite[Prop. 9.2, Thm 9.3]{dolgushevwillwacher2015}.
\begin{prop}
$\TwBraceover$ is an $\mathsf{E}_2$-suboperad of $\TwBrace$. Furthermore, $\TwBraceover$ is isomorphic to $\FSurj$.
\end{prop}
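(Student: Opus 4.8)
The plan is to assemble this from \cite{bergerfresse2004} and \cite[\S 9]{dolgushevwillwacher2015}, checking three things in turn: that $\TwBraceover$ is closed under the operadic composition and under the twisted differential $\partial^m$, so that it is a dg suboperad of $\TwBrace$; that its tree-with-black-vertices basis is in structure-preserving bijection with the word basis $\FWord$ of $\FSurj$; and that $\FSurj$, the second filtration of the surjection operad $\mathsf{Surj}$, is an $\mathsf{E}_2$-operad.

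Closure under composition is immediate from the description recalled above: grafting a tree with black vertices into a vertex of another only enlarges the child sets of vertices already present and creates no new black vertices, so the condition ``every black vertex has at least two children'' survives. Closure under $\partial^m$ is the delicate point. Writing an element of $\TwBraceover$ as a brace-tree $T_0$ with $m$ substituted at a set of its vertices, the Leibniz rule for $\partial^m$ produces (i) terms obtained from $\partial^m T_0$, which insert a fresh black vertex somewhere, and (ii) terms in which a single existing $m$ has been replaced by $\partial^m m = \frac12 l_2(m,m)$, i.e. by two black vertices joined by $l_2$, with the original vertex's children redistributed between them. Both kinds include monomials having a black vertex with fewer than two children --- for example a black root carrying the whole tree as its single child, or a black leaf. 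The claim is that these ``bad'' monomials cancel in pairs, each bad term of type (i) being matched, with opposite sign, by the bad term of type (ii) in which the copy of $l_2(m,m)$ has degenerated into that configuration; hence $\partial^m$ restricts to $\TwBraceover$. This is precisely \cite[Prop. 9.2]{dolgushevwillwacher2015} (where $\TwBraceover$ is denoted $\Br$), which I would invoke rather than re-derive the pairing.

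For the isomorphism $\TwBraceover \cong \FSurj$ I would transport the combinatorial dictionary of \cite{bergerfresse2004} and \cite[\S 9]{dolgushevwillwacher2015}: a planar tree with black vertices, each having at least two children, is read off --- respecting the planar order --- into a surjective, nondegenerate, non-interlacing word in $\FWord(n)$, the white vertices contributing the distinct letters and the black vertices accounting for the caesuras. One then checks that this bijection intertwines the $\Ss_n$-actions, matches the gradings (after the degree shift built into the conventions for $\FSurj$ and for $m$), and carries the ``merging of words'' composition with its shuffle signs $\sgn_{W,W',i}$, respectively the word boundary with its deletion signs $\sgn_W$, to grafting-and-filling, respectively $\partial^m$, on the brace side. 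Together with the fact that $\FSurj$ is a model of the chain little $2$-disks operad \cite{bergerfresse2004} --- equivalently, $\TwBraceover$ is an $\mathsf{E}_2$-operad by \cite[Thm. 9.3]{dolgushevwillwacher2015} --- this yields the statement.

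The step I expect to be the main obstacle is the sign bookkeeping underlying the identification with $\FSurj$: the Koszul signs produced by inserting and permuting the degree-one element $m$ must be reconciled with the shuffle-, caesura- and deletion-signs hard-wired into the definition of $\FSurj$, and obtaining an honest isomorphism rather than one only ``up to sign'' forces one to pin down conventions with care. For this reason the cleanest write-up quotes the precise statements of \cite[Prop. 9.2, Thm. 9.3]{dolgushevwillwacher2015} and \cite{bergerfresse2004} and merely spells out enough of the dictionary to transport them into the notation of this paper.
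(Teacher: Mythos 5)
Your proposal matches the paper's treatment: the paper offers no independent proof, stating only that the proposition is "a combination of results on the surjection operad \cite{bergerfresse2004} with \cite[Prop.~9.2, Thm~9.3]{dolgushevwillwacher2015}," which are exactly the results you invoke. Your additional sketch of the closure checks and of the tree-to-word dictionary is a correct expansion of what those references contain, so the argument is sound and essentially identical in approach.
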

\begin{opm}\label{rem:historical remark}
The dg operads $\TwBraceover$ and $\FSurj$ are both isomorphic to the dg operad $\HG$ which encodes homotopy G-algebras. Historically, Gerstenhaber and Voronov were the first to establish the Gerstenhaber up to homotopy structure on the Hochschild cochain complex and coined the term ``homotopy G-algebra'' \cite{gerstenhabervoronov}. The corresponding dg operad $\HG$ is generated by an associative binary operation $m_2 \in \HG(2)$ of degree $0$ (``a dot product'') and, for every $n\geq 1$, an element $B_n \in \HG(n+1)$ of degree $-n$ (``a $n$-brace'') satisfying the homotopy G-algebra relations (see \cite[Example 4.1]{KVZ} or \cite[\S 3.2]{voronov}). The isomorphism $\HG \cong \FSurj$ realizes the generators $m_2$ and $B_n$ as the words $12$ and $121\ldots1n1$ \cite[\S 1.6.4]{bergerfresse2004}. The isomorphism $\HG \cong \TwBraceover$ realises the generators $m_2$ and $B_n$ as the following trees with black vertices
$$ \scalebox{0.7}{$\tikzfig{treemult}$} \quad \text{ and } \quad \scalebox{0.7}{$\tikzfig{ncorolla}$}\quad .$$

For a proof, see \cite[Cor. 29]{willwacher2016}, keeping in mind that Willwacher considers the $A_\infty$ version of these operads (see also \cite[Rem. 26]{willwacher2016}).

\end{opm}

%
%

\subsection{Twisting of $\Quilt$} \label{parpartwistquilt}

Recall from \S \ref{parparparPreLietoQuilt} that we have a morphism of operads $\Linf \longrightarrow \Quilt$. We apply the twisting procedure from \cite[\S 5.5]{dotsenkoshadrinvallette2024}.

\begin{mydef}
Let $\TwQuilt$ be the twisting of $\Quilt$ by a MC-element, i.e. 
$$\TwQuilt = (\Quilt \hat{\vee} \al, \partial^\alpha)$$
the completed coproduct of $\Quilt$ with a formal element $\al$ of arity $0$ and degree $1$, with differential
\begin{align*}
\partial^{\alpha}(\al) &= \sum_{n\geq 2}\frac{(n-1)(-1)^{\frac{n(n+1)}{2}+1}}{n!} L_n(\al,\ldots,\al), \\
\partial^{\al}(Q) &= \partial(Q) + \sum_{n\geq 2} \frac{(-1)^{\frac{n(n+1)}{2}+1}}{(n-1)!} L_n(\al,\ldots,\al,Q) + \sum_{j=1}^m \frac{(-1)^{\deg(Q)+\frac{n(n+1)}{2}}}{(n-1)!} Q \circ_j L_n(\al,\ldots,\al,-)
\end{align*}
for $Q \in \Quilt(m)$.
\end{mydef}

Quilts with black rectangles provide a $k$-module basis of $\TwQuilt(n)$ as follows. Let a \emph{quilt with black rectangles} $(Q,I)$ consist of a quilt $Q\in \Quilt(n+n')$ and a subset $I$ of $\{1,\ldots,n+n'\}$ of cardinality $n'$. The tuple can be drawn as a quilt $Q$ of $n+n'$ rectangles such that each rectangle $i\in I$ is coloured black. These correspond to the elements of $\TwQuilt$ consisting of a quilt $Q$ such that each rectangle $i\in I$ is filled by an instance of $\al$ through composition. Observe that composition of quilts with black rectangles is simply composing their underlying quilts and colouring the correct rectangles black.

{Notice that the quasi-isomorphism $p: \Quilt \longrightarrow \Brace$ from Theorem \ref{thmhomologyQuilt} is compatible with the respective maps from the $\Linf$ operad. The following result is therefore an immediate consequence of Theorem \cite[Thm. 5.1]{dolgushevwillwacher2015}.} 

\begin{theorem}\label{thmmodel}
The twisted projection
$$\Tw(p): \TwQuilt \longrightarrow \TwBrace$$
which applies the projection on quilts and sends $\alpha$ to $m$, is a quasi-isomorphism.
\end{theorem}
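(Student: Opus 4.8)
The plan is to invoke the general invariance theorem for operadic twisting, due to Dolgushev--Willwacher, which states that if $f \colon \mathcal P \to \mathcal Q$ is a quasi-isomorphism of dg operads under $\Linf$ (that is, compatible with chosen $\Linf$-maps into each), then the twisted morphism $\Tw(f) \colon \Tw(\mathcal P) \to \Tw(\mathcal Q)$ is again a quasi-isomorphism \cite[Thm. 5.1]{dolgushevwillwacher2015}. So the real content of the proof is to check the hypotheses of that theorem are met in our situation.

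First I would record that $p \colon \Quilt \to \Brace$ is a quasi-isomorphism: this is exactly Theorem \ref{thmhomologyQuilt}, already proved. Second, I would verify the compatibility with the $\Linf$-structures. On the $\Quilt$ side we have the morphism $\Linf \to \Quilt$ of \S\ref{parparparPreLietoQuilt} sending $l_n$ to $L_n$; on the $\Brace$ side the $\Linf$-structure is the one factoring through the canonical $\Lie \to \Brace$, $l_2 \mapsto C_2 - C_2^{(12)}$, with all higher $l_n$ acting as zero (for $n\geq 3$). One must check that $p$ intertwines these, i.e.\ that $p(L_2) = C_2 - C_2^{(12)}$ and $p(L_n) = 0$ for $n \geq 3$. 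This is essentially immediate from the definitions: $L_n$ is the antisymmetrization of $P_n$, which is a signed sum of quilts $Q = (W,T)$ of degree $2-n$ in standard order; such a $Q$ has $|W| = 2n - (2-n)\cdot(-1)$... more precisely $\deg(Q) = n - |W| = 2-n$ forces $|W| = 2n-2$, so for $n \geq 3$ the word $W$ is strictly longer than $n$ and hence $p(Q) = 0$ by definition of $p$; while for $n = 2$ the degree-$0$ quilts on two rectangles map precisely to the two trees $C_2$ and $C_2^{(12)}$ with the right signs, recovering the bracket. Thus $p$ is a morphism under $\Linf$.

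Granting these two points, \cite[Thm. 5.1]{dolgushevwillwacher2015} applies verbatim and gives that $\Tw(p)$ is a quasi-isomorphism. It remains only to identify $\Tw(p)$ with the map described in the statement: by construction of operadic twisting, $\Tw(p)$ is the unique morphism of dg operads restricting to $p$ on the underlying operads and sending the formal Maurer--Cartan generator $\al$ of $\TwQuilt$ to the formal Maurer--Cartan generator $m$ of $\TwBrace$; on the basis of quilts with black rectangles it simply applies $p$ to the underlying quilt and keeps the black rectangles black (now interpreted as instances of $m$). This is manifestly the map "apply $p$ on quilts, send $\al \mapsto m$" from the statement.

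The only genuinely non-formal step is the $\Linf$-compatibility of $p$, and even that reduces to a short bookkeeping check on degrees and signs of the degree-$(2-n)$ standard-order quilts; I do not expect any obstacle there beyond keeping track of the sign conventions (which have been reversed relative to \cite{hawkins}). Everything else is a direct citation of the Dolgushev--Willwacher invariance theorem together with the characterization of the twisted morphism by its restriction and its value on the Maurer--Cartan generator. One should also note for safety that the theorem of \cite{dolgushevwillwacher2015} is stated for the completed twisting $\hat{\vee}$ used here, so no issue arises from the fact that $\TwQuilt$ involves a completed coproduct while $\TwBrace$ need only be completed formally; $\Tw(p)$ respects the relevant filtrations by number of $\al$'s (resp. $m$'s), so the quasi-isomorphism statement passes to completions.
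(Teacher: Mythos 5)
Your proposal follows exactly the paper's argument: note that $p\colon\Quilt\to\Brace$ is a quasi-isomorphism compatible with the respective $\Linf$-maps, and then cite \cite[Thm. 5.1]{dolgushevwillwacher2015}. The only difference is that you spell out the $\Linf$-compatibility check (the degree count forcing $p(L_n)=0$ for $n\geq 3$ and $p(L_2)=C_2-C_2^{(12)}$), which the paper simply asserts; your verification is correct.
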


\begin{cor}\label{cormodel}
The operad $\TwQuilt$ is an $\mathsf{E}_2$-operad.
\end{cor}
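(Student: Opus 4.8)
The plan is to deduce Corollary \ref{cormodel} almost immediately from the preceding results, so the proof is a short chain of formal implications. First I would recall that $\TwBrace$ is known to be an $\mathsf{E}_2$-operad: by the preceding discussion, $\TwBraceover$ is a suboperad of $\TwBrace$ which is isomorphic to $\FSurj$ (hence to an $\mathsf{E}_2$-operad), and the inclusion $\TwBraceover \hookrightarrow \TwBrace$ is a quasi-isomorphism — this is exactly the content of \cite[Prop. 9.2, Thm. 9.3]{dolgushevwillwacher2015} combined with \cite{bergerfresse2004}. So $\TwBrace$ has the homology of the little disks operad $\Disk$, i.e.\ it computes the Gerstenhaber operad, and it is connected to a genuine $\mathsf{E}_2$-operad by a zig-zag of quasi-isomorphisms of dg operads.

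Second, I would invoke Theorem \ref{thmmodel}: the twisted projection $\Tw(p): \TwQuilt \longrightarrow \TwBrace$ is a quasi-isomorphism of dg operads. This is itself obtained by applying the twisting functor $\Tw$ to the quasi-isomorphism $p: \Quilt \longrightarrow \Brace$ of Theorem \ref{thmhomologyQuilt}, using that $\Tw$ preserves quasi-isomorphisms (\cite[Thm. 5.1]{dolgushevwillwacher2015}) and that $p$ is compatible with the $\Linf$-structures used to define the twistings (the map $\Linf \to \Quilt$ of \S\ref{parparparPreLietoQuilt} composed with $p$ is the map $\Linf \to \Brace$, since $p(L_n)$ is the antisymmetrisation of $p(P_n)$, and in degree $2-n$ the only quilt surviving is for $n=2$). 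Concatenating, $\TwQuilt$ is connected to $\TwBraceover \cong \FSurj$, hence to the chain little disks operad $\Disk$, by a zig-zag of quasi-isomorphisms of dg operads; therefore $\TwQuilt$ is an $\mathsf{E}_2$-operad.

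There is essentially no obstacle here: the genuine mathematical content sits in Theorem \ref{thmhomologyQuilt} (Hawkins' conjecture) and in the cited twisting machinery, both of which may be assumed. The only point requiring a sentence of care is the compatibility of $p$ with the respective maps from $\Linf$ — i.e.\ that $p$ is a morphism of \emph{twistable} dg operads in the sense needed to apply $\Tw$ functorially — but this is precisely the remark immediately preceding Theorem \ref{thmmodel}, so it can simply be cited. I would therefore write the proof as: ``By \cite[Prop. 9.2, Thm. 9.3]{dolgushevwillwacher2015} together with \cite{bergerfresse2004}, the inclusion $\TwBraceover \hookrightarrow \TwBrace$ is a quasi-isomorphism and $\TwBraceover \cong \FSurj$ is an $\mathsf{E}_2$-operad; by Theorem \ref{thmmodel}, $\Tw(p): \TwQuilt \to \TwBrace$ is a quasi-isomorphism; hence $\TwQuilt$ is quasi-isomorphic to an $\mathsf{E}_2$-operad, so it is one.''

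\begin{proof}
By \cite[Prop. 9.2, Thm. 9.3]{dolgushevwillwacher2015} combined with \cite{bergerfresse2004}, the graded suboperad $\TwBraceover \subseteq \TwBrace$ is isomorphic to $\FSurj$, which is an $\mathsf{E}_2$-operad, and the inclusion $\TwBraceover \hookrightarrow \TwBrace$ is a quasi-isomorphism of dg operads. Hence $\TwBrace$ is an $\mathsf{E}_2$-operad. By Theorem \ref{thmmodel}, the twisted projection $\Tw(p): \TwQuilt \longrightarrow \TwBrace$ is a quasi-isomorphism of dg operads. Therefore $\TwQuilt$ is connected by a zig-zag of quasi-isomorphisms of dg operads to the chain little disks operad $\Disk$, and is thus an $\mathsf{E}_2$-operad.
\end{proof}
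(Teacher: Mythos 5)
Your argument is correct and is exactly the paper's (implicit) proof of this corollary: combine the fact that $\TwBraceover\cong\FSurj$ is an $\mathsf{E}_2$-suboperad of $\TwBrace$ quasi-isomorphic to it (the proposition in \S\ref{parpartwistbrace}, via \cite{bergerfresse2004} and \cite[Prop.~9.2, Thm.~9.3]{dolgushevwillwacher2015}) with the quasi-isomorphism $\Tw(p)$ of Theorem \ref{thmmodel} to get a zig-zag connecting $\TwQuilt$ to an $\mathsf{E}_2$-operad. Your side remark on the compatibility of $p$ with the maps from $\Linf$ (needed to apply $\Tw$ functorially) is also the point the paper flags just before Theorem \ref{thmmodel}, so nothing is missing.
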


\begin{opm}
Recall that  there is a quasi-isomorphism $\mathsf{Lie}\stackrel{\sim}\to \mathsf{TwLie}$ while there is no map $\PreLie$ to $\mathsf{TwPreLie}$, only a quasi-isomorphism $\mathsf{Lie}\to \mathsf{TwPreLie}$ \cite{dotsenko2024homotopical}, which evokes the fact that twisting a pre-Lie algebra is not generally a dg pre-Lie algebra. 

In particular, in the light of Proposition \ref{propprelieinf} we see that the $\Linf$ structure on $\TwQuilt$-algebras, actually arises from twisting a $\PreLie_\infty$-algebra structure. 
	\end{opm}

\section{The $\mathsf{E}_2$-action on the Gerstenhaber-Schack complex} \label{paraction}

\subsection{The Gerstenhaber-Schack complex for prestacks}\label{parparGS}

We recall the notions of prestack and its associated Gerstenhaber-Schack complex, thus fixing terminology and notations. We use the same terminology as in \cite{DVL}, \cite{lowenvandenberghCCT}.

A prestack is a pseudofunctor taking values in $k$-linear categories. Let $\U$ be a small category.

\begin{mydef}
A prestack $\A= (\A,m,f,c)$ over $\U$ consists of the following data:
\begin{itemize}
\item for every object $U \in \U$, a $k$-linear category $(\A(U), m^U, 1^U)$ where $m^U$ is the composition of morphisms in $\A(U)$ and $1^U$ encodes the identity morphisms of $\A(U)$.
\item for every morphism $u:V \longrightarrow U$ in $\U$, a $k$-linear functor $f^u = u \st : \A(U) \longrightarrow \A(V)$. For $u= 1_U$ the identity morphism of $U$ in $\U$,  we require that $(1_U)\st = 1_{\A(U)}$.
\item for every couple of morphisms $v: W \longrightarrow V, u: V \longrightarrow U$ in $\U$, a natural isomorphism
$$c^{u,v}: v\st u \st \longrightarrow (uv)\st.$$
For $u =1$ or $v=1$, we require $c^{u,v} =1$. Moreover, the natural isomorphisms have to satisfy the following coherence condition for every triple $w:T \longrightarrow W$, $v :W \longrightarrow V$ and $u:V \longrightarrow U$:
$$c^{u,vw}(c^{v,w} \circ 	u \st) = c^{uv,w}(w\st \circ c^{u,v}).$$
The data $(m,f,c)$ are also called the \emph{multiplications}, \emph{restrictions} and \emph{twists} of $\A$ respectively. 
\end{itemize}
A presheaf of categories $\A$ is a prestack for which all twists are trivial, i.e. $c^{u,v}=1$ for every $u$ and $v$.
\end{mydef}

Given such a prestack $\A$, we have an associated Gerstenhaber--Schack complex $\CGS(\A)$. In \cite{DVL} this is defined as the totalisation of a multicomplex $\textbf{C}^{\bu,\bu}(\A)$. We first review some notations.

\medskip

\noindent \emph{Notations.}
Let $\si = (U_{0} \overset{u_{1}}{\rightarrow} U_{1} \rightarrow  \ldots   \overset{u_{p}}{\rightarrow}  U_{p} )$ be a $p$-simplex in the category $\U$, then we have two functors $\A(U_{p}) \longrightarrow \A(U_{0})$, namely
$$\si\hs := u_{1}\st  \ldots  u_{p} \st \quad\text{ and }\quad \si\st := (u_{p}\ldots  u_{1})\st$$
For each $1 \leq k \leq p-1$, define the subsimplices $
L_{k}(\si) = (U_{0} \overset{u_{1}}{\rightarrow} U_{1} \rightarrow  \ldots   \overset{u_{k}}{\rightarrow}  U_{k}) \text{ and } R_{k}(\si) = (U_{k} \overset{u_{k+1}}{\rightarrow} U_{k+1} \rightarrow  \ldots   \overset{u_{p}}{\rightarrow}  U_{p})$ and the natural isomorphism $c^{\si,k} = c^{u_{k}\ldots u_{1},u_{p}\ldots u_{k+1}}:(L_{k}\si)\st(R_{k}(\si))\st \longrightarrow \si \st$.

\begin{mydef}
Let $p,q\geq 0$, then define
$$\textbf{C}^{p,q}(\A) = \prod_{\si \in N_{p}(\U)} \prod_{A \in \A(U_{p})^{q+1}} \Hom\left(\bigotimes_{i=1}^{q} \A(U_{p})(A_{i},A_{i-1}), \A(U_{0})(\si\hs A_{q},\si\st A_{0})\right)$$
for $N(\U)$ the nerve of $\U$, and set
$$\CGS^{n}(\A) = \bigoplus_{p+q=n}\textbf{C}^{p,q}(\A)$$
The GS complex is a multicomplex with differential $d=\sum_{j=0}^{q+1} d_j$ for $d_{j}: \textbf{C}^{p,q}(\A) \longrightarrow \textbf{C}^{p+j,q+1-j}(\A)$. We provide a definition of $d_0$ and $d_1$, respectively called the \emph{Hochschild} and \emph{simplicial} component, below, and refer to \cite[Def. 3.2]{vanhermanslowen2022} for a detailed description of $d_j$ for $j\geq 2$.
\end{mydef}

Elements of the GS complex have a neat geometric interpretation as rectangles: for $\te \in \textbf{C}^{p,q}(\A)$ and the data $(\si,A,a)$ from above, we can represent $\te^\si(A)(a)$ as the rectangle of data
$$\tikzfig{GSelement}$$
In particular, the prestack data $(m,f,c) \in \CGS^2(\A)$ can be depicted as 
$$\tikzfig{mult} \qquad \tikzfig{restrictions} \qquad \tikzfig{twists} $$
Similarly, we can draw different components of the differential $d$ using rectangles. For the Hochschild component $d_0(\te)^\si(A)$ we have
\begin{align*}
\scalebox{0.9}{$\tikzfig{GSd0_1}$} \; + \; \sum_{i=1}^q (-1)^i \; \scalebox{0.9}{$\tikzfig{GSd0_i}$} \; + (-1)^{q+1} \; \scalebox{0.9}{$\tikzfig{GSd0_last}$}
\end{align*}
Note that $d_0$ constitutes a differential as well, i.e. it squares to $0$. The simplicial component $(-1)^{p+q+1} d_1(\te)^\si(A)$ can similarly be drawn as
\begin{align*}
\scalebox{0.9}{$\tikzfig{GSd1_1}$} \; + \; \sum_{i=1}^p (-1)^{i} \; \scalebox{0.9}{$\tikzfig{GSd1_i}$}  + \; (-1)^{p+1} \;\scalebox{0.9}{$\tikzfig{GSd1_last}$}
\end{align*}
In case $\A$ is a presheaf of categories, $d_0+d_1$ defines a differential making $(\CC^{\bullet,\bullet}(\A),d_0+d_1)$ a bicomplex. This is the original complex devised by Gerstenhaber and Schack \cite{gerstenhaberschack1}.

We will also be interested in the subcomplex $\CGSnr(\A) \sub \CGS(\A)$ of normalized and reduced cochains which is shown to be quasi-isomorphic to the GS complex \cite[Prop. 3.16]{DVL}. Moreover, on normalized and reduced chains, the differentials $d$ and $d_0+d_1$ coincide. A simplex $\si= (\fromto{u}{p})$ is \emph{reduced} if $u_i = 1_{U_i}$ for some $1 \leq i \leq p$. A cochain $\te = \left(\te^{\si}(A)\right)_{\si,A} \in \CGS(\A)$ is \emph{reduced} if $\te^\si(A) = 0$ for every reduced simplex $\si$. A simplex $a=(\fromto{a}{q})$ in $\A(U)$ is \emph{normal} if $a_i= 1^{U}$ for some $1 \leq i \leq q$. A cochain $\te$ is \emph{normalized} if $\te^{\si}(A)(a) = 0$ for every normal simplex $a$ in $\A(U_p)$.

\begin{opm} \label{remhomotopytransfer}
In \cite{DVL}, Dinh Van and Lowen realise the (normalized and reduced) GS complex $\CGS(\A)$ as a deformation retract of the Hochschild complex $\textbf{C}_{\mathbb{U}}(\tilde{A})$ of the associated $\mathbb{U}$-graded category $\tilde{A}$.
As a result, homotopy transfer equips the GS complex with an algebra structure over any cofibrant solution $\mathcal{Q} \sim \Disk$ of the classical Deligne conjecture. However, the chain maps and homotopy involved in the deformation retract turn out to be intricate and complex, which is already reflected by the fact that they each have an infinite amount of components. Furthermore, as the dg operad encoding Homotopy G-algebras $\HG$ is not cofibrant, finding an explicit cofibrant resolution $\mathcal{Q} \overset{\sim}{\longrightarrow} \HG$ that is a solution to the classical Deligne conjecture is not easy. For example, the evident solution provided by the minimal model $\Ginf$ of the operad $\Gerst$ encoding Gerstenhaber-algebras is cofibrant, yet the morphism $\Ginf \longrightarrow \HG$ provided by Tamarkin \cite{tamarkin1998} is inexplicit.
\end{opm}

\subsection{The action of $\TwQuilt$ for prestacks} \label{parparaction}

\subsubsection{The action of $\Quilt$ for prestacks}
\label{parparparactionquilt}

In \cite[\S 3]{vanhermanslowen2022}, the authors construct a morphism of dg operads
$$ \psi: \Quilt \longrightarrow \End(s^{-1}\CGS(\A),d_0)$$
Remark, this action holds only with respect to the Hochschild differential $d_0$.

Let us describe this action intuitively: a quilt $Q$ acts on GS cochains $(\te_1,\ldots,\te_n)$ via $\psi$ by interpreting them as rectangles (see \S \ref{parparGS}) and composing them according to $Q$, filling in possible `open spaces' by instances of restrictions $f$ and composing with multiplications $m$ both at the bottom and wherever a double line is drawn in $Q$. For a detailed description we refer to \cite[\S 3]{vanhermanslowen2022}. 

Here, we will make the action more concrete using examples.

\begin{vb}\label{ex:action}
The quilt on the right from Examples \ref{exquilt} acts on the cochains
$$
\underline{\te}=(\te_1,\te_2,\te_3,\te_4,\te_5) \in  \CC^{3,1}(\A)\oplus  \CC^{1,3}(\A) \oplus \CC^{2,2}(\A) \oplus \CC^{2,1}(\A) \oplus \CC^{1,1}(\A)$$ given the simplex $\si=(u_1,\ldots,u_5) \in N(\U)$ as
\begin{align*}
\psi\left(\; \scalebox{0.7}{$\tikzfig{quilt2}$}  \;\right)(\underline{\te})^{\si}&= \quad \scalebox{0.9}{$\tikzfig{quiltaction1}$}  \quad + \quad   \scalebox{0.9}{$\tikzfig{quiltaction2}$}  \\
&+ \quad \scalebox{0.9}{$\tikzfig{quiltaction3}$}  \quad + \quad   \scalebox{0.9}{$\tikzfig{quiltaction4}$}  \\
&+ \quad \scalebox{0.9}{$\tikzfig{quiltaction6}$}  \quad + \quad   \scalebox{0.9}{$\tikzfig{quiltaction5}$}  
\end{align*}
where we marked the added instances of $m$ and $f$ by green.
\end{vb}

\subsubsection{The morphism $\TwQuilt \longrightarrow \End(s^{-1}\CGS(\A))$} \label{parparparactiontwistquilt}

In \cite[\S 4]{vanhermanslowen2022}, the authors extend the action of $\Quilt$ on $s^{-1}\CGS(\A)$ by including the twists $c$ and employing the operad $\Quilt_b[\![c]\!]$, which we can rephrase as follows: $\Quilt_b[\![c]\!]$ is the completed coproduct of $\Quilt$ and a formal element $c$ of arity $0$ and degree $1$ imposing the following relations:
\begin{enumerate}
\item $\partial(c) = 0$,
\item $L_2(c,c) = 0$,
\item $Q \circ_i c = 0$ if $i$ has either more than two horizontal children, or at least one horizontal child.
\end{enumerate}
The action $\psi$ of $\Quilt$ extends to $\Quilt_b[\![c]\!]$ by sending the formal element $c$ to the twist $c\in \CC^{2,0}(\A)$, obtaining a morphism 
$$\psi_c: \Quilt_b[\![c]\!] \longrightarrow \End(s^{-1}\CGS(\A)).$$
Further, they obtain a new morphism $\Linf \longrightarrow \Quilt_b[\![c]\!]:l_n \longmapsto L^c_n$ via twisting with $c$ \cite[Thm 4.10]{vanhermanslowen2022}: define for $n\geq 1$ 
$$ L_n^c := \sum_{r\geq 0} \frac{(-1)^{rn+\frac{r(r+1)}{2}}}{r!} L_{n+r}(\underbrace{c,\ldots,c}_{r\text{-times}},-,\ldots,-) .$$
The new differential is given by
$$\partial^c = \partial + \partial_{L^c_1} .$$
\begin{lemma}
We have a surjective morphism of dg operads $\TwQuilt \longrightarrow \Quilt_b[\![c]\!]$ that is the identity on quilts and sends $\alpha$ to $c$. 
\end{lemma}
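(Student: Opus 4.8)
The plan is to exhibit the map directly on generators and then check it is a well-defined morphism of dg operads by verifying compatibility with the differentials and with operadic composition. Recall that $\TwQuilt = (\Quilt \hat{\vee} \al, \partial^\alpha)$ is the completed coproduct of $\Quilt$ with a formal arity-$0$ degree-$1$ generator $\al$, while $\Quilt_b[\![c]\!]$ is the completed coproduct of $\Quilt$ with a formal arity-$0$ degree-$1$ generator $c$ subject to the three relations listed above. Since both operads are generated (as operads, after completion) by $\Quilt$ together with the extra arity-$0$ element, there is an evident candidate map $\Phi$: send every quilt $Q\in\Quilt$ to itself and send $\al$ to $c$. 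By the universal property of the (completed) coproduct, this extends uniquely to a morphism of graded operads $\TwQuilt \to \Quilt_b[\![c]\!]$, provided the target is the quotient described — i.e.\ $\Phi$ is automatically surjective since its image contains $\Quilt$ and $c$, which generate.

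First I would check that $\Phi$ is compatible with the differentials. On a quilt $Q\in\Quilt(m)$ we have $\partial^{\al}(Q) = \partial(Q) + \sum_{n\geq 2} \frac{(-1)^{\frac{n(n+1)}{2}+1}}{(n-1)!} L_n(\al,\ldots,\al,Q) + \sum_{j=1}^m \frac{(-1)^{\deg(Q)+\frac{n(n+1)}{2}}}{(n-1)!} Q \circ_j L_n(\al,\ldots,\al,-)$, whereas in $\Quilt_b[\![c]\!]$ the new differential is $\partial^c = \partial + \partial_{L^c_1}$, where $\partial_{L_1^c}$ is the derivation determined by $L_1^c = \sum_{r\geq 1}\frac{(-1)^{r+\frac{r(r+1)}{2}}}{r!}L_{1+r}(c,\ldots,c,-)$ (the twisting of the $\Linf$-structure by the Maurer--Cartan element $c$). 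Applying $\Phi$ to $\partial^\al(Q)$ and using $\Phi(\al)=c$ turns each $L_n(\al,\ldots,\al,Q)$ into $L_n(c,\ldots,c,Q)$ and similarly for the $\circ_j$ terms; summing over $n$ reproduces exactly the two parts of $\partial_{L_1^c}(Q)$ coming from plugging $c$'s ``above'' and ``below'' $Q$. Likewise $\partial^\al(\al) = \sum_{n\geq 2}\frac{(n-1)(-1)^{\frac{n(n+1)}{2}+1}}{n!}L_n(\al,\ldots,\al)$ must map to $\partial^c(c)$; but $\partial^c(c)=\partial(c)+L_1^c(c) = 0 + \sum_{r\geq 1}\frac{(-1)^{r+\frac{r(r+1)}{2}}}{r!}L_{1+r}(c,\ldots,c,c)$, and by relations (1) and (2) of $\Quilt_b[\![c]\!]$ ($\partial(c)=0$ and $L_2(c,c)=0$) together with the fact — which I would isolate as the technical heart — that all $L_n(c,\ldots,c)$ vanish in $\Quilt_b[\![c]\!]$ for $n\geq 2$, both sides are zero. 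This vanishing follows from relation (3): in $L_n(c,\ldots,c)$ every constituent quilt has $n$ rectangles all filled by $c$'s, so at least one rectangle $i$ receives a $c$ while still having a horizontal child (for $n\geq 2$ one cannot avoid this in a quilt of degree $2-n$), killing the term.

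Next I would check compatibility with operadic composition. On $\Quilt$-inputs this is trivial since $\Phi$ is the identity there; the content is in compositions involving $\al$. The only subtlety is that $\Phi$ is well-defined as a map out of $\TwQuilt$, i.e.\ that it respects no relations in $\TwQuilt$ beyond those that also hold in $\Quilt_b[\![c]\!]$ — but $\TwQuilt$ is a \emph{free} completed coproduct $\Quilt\hat\vee\al$ with no relations among the $\al$'s, so there is nothing to check: any set-theoretic assignment of generators extends, and we only need the differential-compatibility above. Conversely, surjectivity onto $\Quilt_b[\![c]\!]$ requires that the three defining relations of the target are satisfied by the images, which is automatic: $\partial(c)=0$, $L_2(c,c)=0$, and the vanishing $Q\circ_i c=0$ when $i$ has a horizontal child all hold in $\Quilt_b[\![c]\!]$ by construction, so $\Phi$ descends (indeed, $\Phi$ is defined from $\TwQuilt$ which has fewer relations, so it maps onto the quotient).

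The main obstacle I expect is the bookkeeping of signs in matching $\sum_{n\geq 2}\frac{(-1)^{\frac{n(n+1)}{2}+1}}{(n-1)!}L_n(\al,\ldots,\al,Q)$ against $\partial_{L_1^c}(Q)$: the twisted $\Linf$-operation $L_1^c$ is defined with coefficients $\frac{(-1)^{rn+\frac{r(r+1)}{2}}}{r!}$ in the formula for $L_n^c$, and one must verify that the combinatorial factors $\frac{1}{(n-1)!}$ arising from the $\al$-insertions in $\partial^\al$ conspire with the $\frac{1}{r!}$ in $L_1^c$ — via the standard symmetry argument that $n-1 = r$ of the slots are $\al=c$ and these are interchangeable — to give precisely equal coefficients, and that the sign $(-1)^{\frac{n(n+1)}{2}+1}$ matches $(-1)^{r+\frac{r(r+1)}{2}}$ with $r=n-1$ (a routine but error-prone parity check). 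Once this sign/coefficient matching is pinned down, together with the vanishing lemma for $L_n(c,\ldots,c)$, the proof is complete. I would likely present this as: ``$\Phi$ is the identity on $\Quilt$ and $\al\mapsto c$; it is surjective since $\Quilt$ and $c$ generate $\Quilt_b[\![c]\!]$; it commutes with the differential by the explicit formulas above together with the vanishing of $L_n(c,\dots,c)$ for $n\ge2$ in $\Quilt_b[\![c]\!]$ (relation (3)); hence it is a morphism of dg operads.''
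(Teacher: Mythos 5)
Your proposal is correct and follows essentially the same route as the paper: define the map on generators (identity on $\Quilt$, $\alpha\mapsto c$), note that well-definedness and surjectivity are automatic since $\TwQuilt$ is the free completed coproduct and the image contains generators of the quotient, and reduce everything to checking that $\partial^\alpha$ matches $\partial^c$ on $Q$ and on $\alpha$. The only cosmetic quibble is your parenthetical justification of $L_n(c,\dots,c)=0$ for $n\ge 2$: the reason a rectangle with a horizontal child must receive a $c$ is that the underlying tree $T$ on $n\ge 2$ vertices is connected and hence has a non-leaf, not the degree count $2-n$.
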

\begin{proof}
It suffices to verify that the differential is preserved. For $Q \in \Quilt$, unravelling the definitions shows $\partial^\al(Q)$ is exactly $\partial^c(Q)$. Further, $\partial^c(c) = \partial_{L^c_1}(c)$ which corresponds on the nose to $\partial^\al(\al)$ when replacing $c$ by $\alpha$. 
\end{proof}
\begin{opm}
Observe that $\Quilt_b[\![c]\!]$ is a quotient of $\TwQuilt$ by the ideal spanned by the MC-equation of $\alpha$ and some extra relations on $c$.
\end{opm}

This gives us the desired solution to the Deligne conjecture for prestacks.
\begin{theorem}\label{thmaction}
The $\mathsf{E}_2$-operad $\TwQuilt$ acts on the desuspended Gerstenhaber-Schack complex, i.e. we have a morphism of dg operads
$$\TwQuilt \longrightarrow \End(s^{-1}\CGS(\A)).$$
\end{theorem}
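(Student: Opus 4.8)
The plan is to assemble the desired morphism by composing two maps that are already available: the surjection $\TwQuilt \longrightarrow \Quilt_b[\![c]\!]$ established in the lemma immediately above, and the action $\psi_c\colon \Quilt_b[\![c]\!] \longrightarrow \End(s^{-1}\CGS(\A))$ of \cite[Thm.~4.10]{vanhermanslowen2022}. So the statement is essentially a formal consequence of what precedes it, and the proof is a one-line composition once the compatibility of differentials is checked. However, there is a subtlety: $\psi_c$ is only asserted in \cite{vanhermanslowen2022} to be compatible with the \emph{twisted} differential $\partial^c$ on $\Quilt_b[\![c]\!]$ and the differential $d = d_0 + d_1 + \cdots$ (equivalently $d_0$ together with the perturbation coming from $L_1^c$) on the GS complex, whereas $\psi$ alone only intertwines the $\Quilt$-differential with $d_0$. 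Thus the key point to verify is that the composite $\TwQuilt \twoheadrightarrow \Quilt_b[\![c]\!] \xrightarrow{\psi_c} \End(s^{-1}\CGS(\A))$ respects $\partial^\alpha$ on the source and the full GS differential $d$ on the target.

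Concretely, I would proceed as follows. First, recall that the lemma above shows the projection $q\colon\TwQuilt \to \Quilt_b[\![c]\!]$ satisfies $q\circ\partial^\alpha = \partial^c\circ q$, so it suffices to treat $\psi_c$. Second, I would recall from \S\ref{parparGS} that on the normalized reduced subcomplex $\CGSnr(\A)$ the differentials $d$ and $d_0 + d_1$ agree, and that $\CGSnr(\A) \hookrightarrow \CGS(\A)$ is a quasi-isomorphism \cite[Prop.~3.16]{DVL}; this lets me phrase the action on $s^{-1}\CGSnr(\A)$ where the target differential is the manageable $d_0 + d_1$. Third — the heart of the matter — I would verify that under $\psi_c$ the operation $L_1^c = \sum_{r\ge0}\frac{(-1)^{r+\binom{r+1}{2}}}{r!}L_{1+r}(c,\dots,c,-)$ acts precisely as the simplicial differential $d_1$ (and its higher twists $d_j$, $j\ge 2$, which are built from the restrictions and twists); this is exactly the content that makes $\partial^c = \partial + \partial_{L_1^c}$ match $d = d_0 + d_1 + \cdots$ under $\psi_c$. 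This identification is already implicit in \cite[\S4]{vanhermanslowen2022} — indeed the whole point of introducing $\Quilt_b[\![c]\!]$ and the element $c \mapsto c \in \CC^{2,0}(\A)$ there is to encode the twists $c$ and thereby produce the full GS differential — so the verification amounts to citing \cite[Thm.~4.10, \S4]{vanhermanslowen2022} and matching signs against our reversed-degree conventions.

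Finally, I would combine these: since $q$ is a morphism of dg operads onto $\Quilt_b[\![c]\!]$ and $\psi_c$ is a morphism of dg operads (with respect to $\partial^c$ and the full GS differential $d$, after passing to $\CGSnr(\A)$), the composite $\psi_c \circ q\colon \TwQuilt \longrightarrow \End(s^{-1}\CGS(\A))$ is a morphism of dg operads, which is the claim. The main obstacle is purely bookkeeping rather than conceptual: one must be careful that the MC-element $\alpha$ of degree $1$ in $\TwQuilt$ is sent to $c \in \CC^{2,0}(\A)$ and that the twisting formula defining $\partial^\alpha$ (with its coefficients $\frac{(-1)^{n(n+1)/2+1}}{(n-1)!}L_n(\alpha,\dots,\alpha,-)$ etc.) matches, term by term and sign by sign, the expansion $\partial^c = \partial + \partial_{L_1^c}$ and hence the GS differential; since the cited results of \cite{vanhermanslowen2022} were stated in the opposite degree convention, the only real work is transporting those sign computations. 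Once that is in place the theorem follows immediately, and one may additionally note — as the paper does in the subsequent remark — that this $\TwQuilt$-action genuinely subsumes the ad hoc $\Quilt_b[\![c]\!]$-action of \cite{vanhermanslowen2022} because the latter factors through $q$.
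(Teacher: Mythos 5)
Your proposal matches the paper's argument: Theorem \ref{thmaction} is obtained exactly as the composite of the surjection $\TwQuilt \twoheadrightarrow \Quilt_b[\![c]\!]$ from the preceding lemma with the action $\psi_c$ of \cite[Thm.~4.10]{vanhermanslowen2022}, the only content being the compatibility of $\partial^\alpha$ with $\partial^c$, which is the lemma. The detour through $\CGSnr(\A)$ is unnecessary (and would weaken the statement if taken literally), since $\psi_c$ is already a morphism of dg operads into $\End(s^{-1}\CGS(\A))$ with the full differential.
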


\subsection{Another action of $\TwQuilt$ for presheaves}\label{parparactionpresheaves}

For this section, let $\A:\U \longrightarrow \Cat(k)$ be a presheaf of categories.

\subsubsection{Another action of $\Quilt$ for presheaves} \label{parparparactionpresheaves}

In \cite{hawkins}, Hawkins obtains an action of $\Quilt$ on the desuspended GS complex for presheaves in a fundamentally different way, as we now will explain. For a detailed description of this morphism 
$$\psi^{\mathrm{Hawkins}}:\Quilt \longrightarrow \End(s^{-1}\CGS(\A),d_1)$$
we refer to \cite[Def. 4.22]{hawkins}. Observe that this is a morphism of dg operads with respect to the simplicial differential of the GS complex. Note that for prestacks the simplicial component $d_1$ of the differential importantly not even squares to zero.

By switching the role of trees and words, we can interpret a quilt on its side: a tree determines the vertical adjacencies and a word determines the horizontal adjacencies. For instance, Examples \ref{exquilt} are instead drawn as 
$$ \scalebox{0.7}{$\tikzfig{quilt1_hawkins}$} \quad \text{ and } \quad \scalebox{0.7}{$\tikzfig{quilt2_hawkins}$} .$$
In this case, the root of the tree corresponds to the top rectangle.

Via $\psi^\mathrm{Hawkins}$, a quilt $Q=(W,T)$ acts on GS cochains $(\te_1,\ldots,\te_n)$ by composing them vertically according to the tree $T$ matching up the rectangles horizontally via the word $W$. Again, the `open spaces' are filled in by instances of restrictions $f$. However, as the restrictions of presheaves are functorial, i.e. $f^{uv}= f^v f^u$ for two composable arrows $u$ and $v$, there is no need to involve their multiplications $m$ (nor vertical versions thereof). In particular, as there is a single rectangle at the bottom of $Q$, there is no need to compose with multiplications at the bottom.

We illustrate the action by an example.
\begin{vb}
For the quilt $Q$ and cochains $\underline{\te}=(\te_1,\ldots,\te_5)$ given in Example \ref{ex:action}, $\psi^{\mathrm{Hawkins}}(Q)(\underline{\te})=0$. If we replace cochain $\te_2$ by a cochain $\te_2'$ in $\CC^{3,1}(\A)$, we obtain
\begin{align*}
\psi^{\mathrm{Hawkins}}\left(\; \scalebox{0.7}{$\tikzfig{quilt2_hawkins}$}  \;\right)(\te_1,\te_2',\te_3,\te_4,\te_5)^{\si}&= \quad \scalebox{0.9}{$\tikzfig{quiltactionhawk1}$}  \quad + \quad   \scalebox{0.9}{$\tikzfig{quiltactionhawk3}$} 
\end{align*} 
\end{vb}
\begin{opm}
Observe that we have two distinctly different actions of $\Quilt$ on the GS complex for a presheaf: one that employs both the data of $m$ and $f$, and one that employs solely the datum $f$.
\end{opm}

\subsubsection{Another morphism $\TwQuilt \longrightarrow \End(s^{-1}\CGS(\A))$} 
\label{parparparotheractiontwistquilt}

In \cite{hawkins}, Hawkins extends the action of $\Quilt$ on $s^{-1}\CGS(\A)$ by twisting with the multiplications $m$ and employing the operad $\mQuilt$, which we can rephrase as follows: $\mQuilt$ is the completed coproduct of $\Quilt$ and a formal element $m$ of arity $0$ and degree $1$ imposing the following relations:
\begin{enumerate}
\item $\partial(m) = 0$,
\item $L_2(m,m) = 0$,
\item \label{mquilt3} $Q \circ_i m = 0$ if $i$ has either more than two vertical children, has at least one horizontal child or is a parent horizontally. 
\item $Q \circ_i m  = Q' \circ_i m$ if $Q=(T,W)$ and $Q'=(T,W')$ and $W$ and $W'$ differ solely in the position of $i$. 
\end{enumerate}
The action $\psi^{\mathrm{Hawkins}}$ extends to $\mQuilt$ by sending the formal element $m$ to the multiplication $m\in \CC^{0,2}(\A)$, obtaining a morphism of dg operads
$$\psi^{\mathrm{Hawkins}}_c:\mQuilt \longrightarrow \End(s^{-1}\CGS(\A))$$
where, this time, the GS complex is endowed with the full differential $d_0 + d_1$. Further, Hawkins obtains a new morphism $\Linf \longrightarrow \mQuilt:l_n \longmapsto L^m_n$ via twisting with $m$: define for $n\geq 1$ 
$$ L_n^m := L_n + (-1)^{n+1}L_{n+1}(m,-,\ldots,-).$$
The new differential is given by
$$\partial^m = \partial + \partial_{L^m_1} .$$
\begin{lemma}
We have a morphism of dg operads $\TwQuilt \longrightarrow \mQuilt$ that is the identity on quilts and sends $\alpha$ to $m$. 
\end{lemma}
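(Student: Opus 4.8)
The plan is to show that the twisting differential $\partial^\alpha$ on $\TwQuilt$ is carried to the twisting differential $\partial^m$ on $\mQuilt$ under the map that is the identity on quilts and sends $\alpha \mapsto m$, so that the map is a morphism of dg operads. As in the proof of the previous lemma (the surjection onto $\Quilt_b[\![c]\!]$), the only thing to verify is compatibility with the differentials, since the underlying map on the $\Ss$-modules is evidently a morphism of graded operads: $\TwQuilt$ has a basis of quilts with black rectangles, $\mQuilt$ is the completed coproduct $\Quilt \,\hat\vee\, m$ with $m$ of arity $0$ and degree $1$, and sending $\alpha \mapsto m$ and each quilt to itself respects arities, degrees, the $\Ss$-action, and operadic composition (composition of quilts with black rectangles is composition of the underlying quilts).

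First I would recall the two relevant twisted differentials. On $\TwQuilt$ the differential is $\partial^\alpha$, built from $\partial$ and the operations $L_n$ plugged with copies of $\alpha$; on $\mQuilt$, Hawkins' twisted differential is $\partial^m = \partial + \partial_{L^m_1}$, where $L^m_n = L_n + (-1)^{n+1} L_{n+1}(m,-,\ldots,-)$. The key point is that, under the defining relations (\ref{mquilt3}) and its companions of $\mQuilt$, every term of $\partial^\alpha$ in which two or more copies of $\alpha$ meet a single vertex of a quilt, or in which an $\alpha$ sits at a vertex that is a horizontal parent, maps to zero: the relation $Q\circ_i m = 0$ when $i$ has more than two vertical children or any horizontal child kills all $L_n(m,\ldots,m,-)$ and $Q\circ_j L_n(m,\ldots,m,-)$ terms with $n \ge 3$, and $L_2(m,m)=0$, $\partial(m)=0$ kill the remaining higher brackets evaluated only on $\alpha$'s. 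Concretely I would check: (i) $\partial^\alpha(\alpha) \mapsto 0$, matching $\partial^m(m) = \partial(m) + \partial_{L^m_1}(m) = L_2^m$ evaluated suitably, which also vanishes by $\partial(m)=0$ and $L_2(m,m)=0$; and (ii) for $Q \in \Quilt(m)$, $\partial^\alpha(Q)$ maps to $\partial(Q) + L_2(m,Q) - \sum_j Q\circ_j L_2(m,-) + (\text{terms with } L_{n\ge 2}(m,\ldots,m,Q))$, and the surviving part is exactly $\partial(Q) + \partial_{L^m_1}(Q)$ once one uses $L_1^m(Q) = L_2(m,Q)$ contributions and the relation (\ref{mquilt3}) to discard $n \ge 3$ contributions and horizontal-parent configurations. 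The combinatorial signs already match because the $\Linf$-structure on $\TwQuilt$ and the $L^m_n$ of $\mQuilt$ come from the \emph{same} operations $L_n$ on $\Quilt$ with the same conventions.

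The main obstacle I expect is bookkeeping of the relations: one must verify carefully that \emph{every} term of $\partial^\alpha$ that does not have a counterpart in $\partial^m = \partial + \partial_{L_1^m}$ is actually sent to $0$ by one of the four relations defining $\mQuilt$ — in particular relation (\ref{mquilt3}), which forbids $Q \circ_i m$ whenever $i$ has more than two vertical children, has a horizontal child, or is a horizontal parent. Since $L_n(\alpha,\ldots,\alpha,-)$ and $L_n(\alpha,\ldots,\alpha,Q)$ for $n\ge 3$ unavoidably create vertices with $\ge 3$ vertical children or with horizontal children after the substitution, these vanish in $\mQuilt$; the last relation of $\mQuilt$ is needed to see that the surviving $n=2$ terms are independent of the horizontal placement of the $m$-vertex, so that they assemble into the single well-defined operator $\partial_{L^m_1}$. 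Once this term-by-term matching is done, the lemma follows. I would therefore phrase the proof as: ``It suffices to verify that the differential is preserved. Under $\alpha \mapsto m$, relations (1)--(\ref{mquilt3}) and the last relation of $\mQuilt$ annihilate all terms of $\partial^\alpha$ involving $L_n$ for $n \ge 3$ and all configurations with an $\alpha$ at a horizontal parent, leaving precisely $\partial + \partial_{L^m_1}$; comparing with the definition $\partial^m = \partial + \partial_{L^m_1}$ gives the claim, and surjectivity is clear since $\mQuilt$ is generated by the quilts together with $m$.''
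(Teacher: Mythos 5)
Your proof is correct and takes essentially the same route as the paper, which proves the analogous lemma for $\Quilt_b[\![c]\!]$ by simply checking compatibility of the twisted differentials and leaves this one to the reader; you also rightly read the statement's ``sends $\alpha$ to $c$'' as ``sends $\alpha$ to $m$''. Your identification of the key point --- that relations (2)--(3) of $\mQuilt$ annihilate every term of $\partial^\alpha$ involving $L_n$ for $n\geq 3$, so that the image of $\partial^\alpha$ collapses to exactly $\partial+\partial_{L^m_1}$, consistently with Hawkins' $L^m_n$ having only a single $m$-correction term --- is precisely the step the paper leaves implicit.
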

The following constitutes another solution to the Deligne conjecture for presheaves of categories.
\begin{theorem}
The operad $\TwQuilt$ acts on the desuspended GS complex, i.e. we have a morphism of dg operads
$$\TwQuilt \longrightarrow \End(s^{-1}\CGS(\A)).$$
\end{theorem}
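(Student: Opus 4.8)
The plan is to deduce this final theorem as a formal consequence of the two lemmas that precede it in \S\ref{parparparotheractiontwistquilt}, exactly as Theorem \ref{thmaction} was deduced in \S\ref{parparparactiontwistquilt}. Concretely, I would compose the morphism of dg operads $\TwQuilt \longrightarrow \mQuilt$ (the lemma just stated, sending $\alpha$ to the formal element and being the identity on quilts) with Hawkins' twisted action $\psi^{\mathrm{hawkins}}_c: \mQuilt \longrightarrow \End(s^{-1}\CGS(\A))$ (where the GS complex is now equipped with the full differential $d_0+d_1$). The composite is a morphism of dg operads $\TwQuilt \longrightarrow \End(s^{-1}\CGS(\A))$, which is the assertion.

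First I would recall that $\psi^{\mathrm{hawkins}}_c$ is a genuine dg operad morphism with respect to $d_0+d_1$ — this is the content of the cited result \cite[Def. 4.22]{hawkins} together with Hawkins' twisting with $m$; since $\A$ is a presheaf, $d_0+d_1$ is an honest differential (a bicomplex totalisation), so there is no subtlety about which differential we twist by. Next I would invoke the lemma giving $\TwQuilt \longrightarrow \mQuilt$: here one should double-check that the element $\alpha$ is sent to the formal degree-$1$ element of $\mQuilt$ consistently (the lemma statement says ``sends $\alpha$ to $c$'', which appears to be a typo for the formal element $m$ of $\mQuilt$; in any case the relevant point is compatibility of the two Maurer--Cartan elements and of the two $\Linf$-morphisms $\Linf \to \Quilt$ used on both sides). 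Granting that the lemma is proved — its proof is the same bookkeeping as the $\Quilt_b[\![c]\!]$ case, verifying that the formal MC-relation $\partial^\alpha(\alpha)$ maps to Hawkins' relations $\partial(m)=0$, $L_2(m,m)=0$, together with relations \eqref{mquilt3} and (4) of $\mQuilt$, and that $\partial^\alpha(Q)$ maps to $\partial^m(Q)$ — the composition of the two dg operad morphisms is automatic.

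The main obstacle, and the only thing that requires thought rather than formal nonsense, is establishing the lemma $\TwQuilt \longrightarrow \mQuilt$ itself: one must check that the universal property of the twisted operad $\TwQuilt = (\Quilt \,\hat{\vee}\, \alpha, \partial^\alpha)$ applies, i.e. that the image of $\alpha$ in $\mQuilt$ is a Maurer--Cartan element for the $\Linf$-structure pulled back along $\Linf \to \Quilt \hookrightarrow \mQuilt$. This amounts to comparing $\partial^\alpha(\alpha) = \sum_{n\geq 2} \tfrac{(n-1)(-1)^{n(n+1)/2+1}}{n!} L_n(\alpha,\ldots,\alpha)$ with the defining relations of $\mQuilt$ (relations $\partial(m)=0$ and $L_2(m,m)=0$), noting that the higher $L_n(m,\ldots,m)$ vanish in $\mQuilt$ because of relation \eqref{mquilt3} — each quilt appearing in $L_n$ has a root with several children and so composing it with more than one $m$, or with an $m$ that must land at a vertex with a horizontal parent, is killed. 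Once $\alpha \mapsto m$ is seen to be MC, the universal property extends it uniquely to a dg operad morphism, and the rest is the routine identification $\partial^\alpha(Q) \leftrightarrow \partial^m(Q)$ via unravelling, just as in the $\Quilt_b[\![c]\!]$ lemma. I would present the proof of the theorem in one or two lines (``Compose the morphism of the preceding lemma with $\psi^{\mathrm{hawkins}}_c$''), leaving the genuine verification inside the lemma's proof.
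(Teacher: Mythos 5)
Your proposal matches the paper's (implicit) argument exactly: the theorem is obtained by composing the preceding lemma's morphism $\TwQuilt \longrightarrow \mQuilt$ with Hawkins' twisted action $\psi^{\mathrm{hawkins}}_c$, and your verification sketch for the lemma (checking that $m$ is Maurer--Cartan using relations (2) and \eqref{mquilt3}, then matching $\partial^\alpha$ with $\partial^m$) is the same bookkeeping the paper carries out for the analogous $\Quilt_b[\![c]\!]$ lemma. You are also right that ``sends $\alpha$ to $c$'' in the lemma is a typo for the formal element $m$ of $\mQuilt$.
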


\def\cprime{$'$}
\providecommand{\bysame}{\leavevmode\hbox to3em{\hrulefill}\thinspace}
\bibliography{Bibfile}
\bibliographystyle{amsalpha}

\end{document}